\numberwithin{equation}{section}
\theoremstyle{plain}
\newcommand{\babs}[1]{{\bigl\lvert #1\bigr\rvert}}
\newcommand{\Babs}[1]{{\Bigl\lvert #1\Bigr\rvert}}
\DeclarePairedDelimiter{\abs}{\lvert}{\rvert}
\DeclarePairedDelimiter{\norm}{\lVert}{\rVert}
\renewcommand{\d}{{\rm d}} 
\newcommand{\N}{{\mathbb{N}}}
\newcommand{\E}{{\mathbb{E}}}
\newcommand{\R}{{\mathbb{R}}}
\newcommand{\Z}{{\mathbb{Z}}}
\renewcommand{\P}{{\mathbb{P}}}
\newcommand{\Acal}   {{\mathcal A}} 
\newcommand{\Ccal}   {{\mathcal C }} 
\newcommand{\Ecal}   {{\mathcal E }} 
\newcommand{\Jcal}   {{\mathcal J }} 
\newcommand{\Kcal}   {{\mathcal K }} 
\newcommand{\Rcal}   {{\mathcal R }} 
\newcommand{\Tcal}   {{\mathcal T }} 
\newcommand{\Ocal}   {{\mathcal O }} 
\newcommand{\Mcal}   {{\mathcal M }} 
\newcommand{\Wcal}   {{\mathcal W }} 
\newcommand{\Scal}{{\mathcal{S}}}
\newcommand{\1} {\mathds{1}} 
\newcommand{\e} {\varepsilon} 
\newcommand{\ex}{{\rm e}} 
\newcommand{\eps}{\varepsilon}
\newcommand{\Sfrak}{{\mathfrak{S}}}
\newcommand{\p}{\mathfrak{p}}
\renewcommand{\L} {\Lambda} 
\renewcommand{\l} {\lambda} 
\newcommand{\Ttt}{{T_\tau^{1/2}}}
\newcommand{\pn}{p^{\langle N \rangle}}
\newcommand{\Pn}{P^{\langle N \rangle}}
\newcommand{\En}{E^{\langle N \rangle}}
\newcommand{\Dt}{\Delta^\tau_N}
\newcommand{\Ene}{E^{\langle N \rangle}_{n,\e}}
\newtheorem{cor}[theorem]  {Corollary} 
\newcommand{\Pne}{P^{\langle N \rangle}_{n,\e}}
\newcommand{\Tt}{{T_\tau}}
\newcommand{\floor}[1]{\left\lfloor #1 \right\rfloor}
\begin{document}

\title{Large deviations of the range of the planar random walk on the scale of the mean
}

\titlerunning{Large deviations of the range of the planar random walk}        

\author{Jingjia Liu         \and
       Quirin  Vogel* 
}


\institute{Jingjia Liu \at
             {Fachbereich Mathematik und Informatik,
	Universit\"at M\"unster,
	Einsteinstra\ss e 62,
	48149 M\"unster,
	Germany}\\
              \email{jingjia.liu@uni-muenster.de}           
           \and
           Quirin Vogel* \at
              {Mathematics Institute, University of Warwick, Coventry CV4 7AL, United Kingdom}\\
              \email{Q.Vogel@warwick.ac.uk*}  
}

\date{Received: date / Accepted: date}

\maketitle

\begin{abstract}
    We prove an upper large deviation bound on the scale of the mean for a symmetric random walk in the plane satisfying certain moment conditions. This result complements the study of van den Berg, Bolthausen and den Hollander, where the continuum case of the Wiener Sausage was studied, and in Phetpradap, in which one is restricted to dimension three and higher.
\keywords{Large Deviations \and Random Walk Range \and Planar Random Walk}
\subclass{MSC primary 60G50; secondary 60F10}
\end{abstract}

\section{Introduction}
The range of the random walk is a topic which has been studied for more than 60 years. For our purpose, a random walk $S_n=\sum_{i=1}^n X_i$ is defined to be the sum of i.i.d.\ random variables on $\Z^d$, where $X_i$ has mean zero. The range $\Rcal_n$ of a random walk is then defined as
\begin{equation*}
    \Rcal_n=\# \{x\in \Z^d\colon \exists\, k\in \{0,\ldots n\}\text{ with } S_k=x\}\, .
\end{equation*}
Previous works, for example \cite{dvoretzky1951} and \cite{Jain1971}, examined the mean and the variance of $\Rcal_n$. It was proven that the mean range of a fairly general random walk (with identity covariance) is given by 
\begin{equation}\label{meanEq}
    E[\Rcal_n]\sim\begin{cases}
    \left(\frac{8n}{\pi}\right)^{1/2}&\text{ if }d=1\, ,\\
    \frac{2\pi n}{\log n}&\text{ if }d=2\, , \\
    \kappa_d n &\text{ if } d\ge 3\, ,
    \end{cases}
\end{equation}
with $\kappa_d=P(S_i\neq 0 \text{ for all } i\ge 1)$, see \cite[Equation 5.3.39]{chen2010random}\footnote{Note that a variety of probability measures is being used throughout the text. As a rule of thumb, \texttt{mathbb} denotes a measure on continuum objects, while standard font refers to discrete random variables. For more details, see Section 5.}.  \\
Estimations of error terms as well as asymptotics for the variance of $\Rcal_n$ are also available from the aforementioned references. As proof techniques developed, large deviation results on the scale $n^{d/(d+2)}$ were obtained in \cite{DV75DistinctRW}. Central limit theorems were given in \cite{JO68} and \cite{gall1991}. Laws of the iterated logarithm can be found in \cite{BK02}. For a good overview of these classical results, we refer the reader to \cite{chen2010random}, where more precise statements are presented.

In recent years the understanding of different properties of the range of the random walk had been refined. We present a (very incomplete) selection of these results. In \cite{bass2009moderate} moderate deviations of the renormalized range $\Rcal_n-\E[\Rcal_n]$ were studied. Uchiyama gave in \cite{Uchiyama2009} an asymptotic expansion of the expectation of the range of a random walk bridge, which holds uniformly in a large set of possible end-points of the bridge. In a sequence of papers \cite{AS}, \cite{ASS1} and \cite{ASS2}, the capacity of the range of the random walk was analyzed, with a focus on precise results in high dimensions. A strong law of large numbers type result for the boundary of the range of the random walk was obtained in \cite{DGK18} for both transient and recurrent random walks. Additionally, the range of a planar random walk conditioned on never hitting the origin was studied in \cite{GPV18}.

The study of the range of random walks has a lot of important applications. For instance, it gives us a glimpse into the geometry and approximate fractal dimension of the sample paths. Furthermore, the range of random walks can be used in the derivation of an asymptotic expansion for the random walk on a lattice with a random distribution of traps, see \cite{denHollander1984}. It can be also used to describe the volume fraction of polymers, see \cite{WR07}. Another reason to research the range of random walks arises from its interesting connection to the Gaussian free field. More precisely, the isomorphism constructed by Le Jan in \cite{lejan2008} allows us to express certain connectivity properties of the field in terms of the range of the random walk. Hence we hope that our work is not only essential in its own light by providing a concentration inequality and allowing the evaluation of exponential functionals of the range, but also serves as a tool to improve our understanding of other areas of probability theory.

The continuum analogue of the range of a random walk can be seen as the volume along the trajectory of a Brownian motion: Denote a standard Brownian motion in $\R^d$ by $(M_t)_{t\geq 0}$, and define the Wiener sausage ($a>0$)
$$
W_t^a=\bigcup_{0\leq s\leq t} B_a(M_s),\quad t\geq 0,
$$
where $B_a(x)$ is the ball with radius $a$ around a point $x$. In \cite{BBH}, a large deviation bound was proven
\begin{equation}\label{BBHbound}
    \lim_{t\to \infty} \frac{1}{\tau}\log \P\left(\texttt{volume}(W_t^a)\le b\Tt \right)=-I^{g_a}(b)\, ,
\end{equation}
where $\tau$ is given by $t^{(d-2)/d}$ for $d\ge 3$ and $\tau=\log t$ for $d=2$. The scale of the mean is $\Tt=t$, for $d\ge 3$ and $\Tt=t/\log t$, for $d=2$. Here, $I^{g_a}$ is the rate function depending on $g_a$, a (dimension dependent) constant parameterized by the thickening $a>0$ in the case $d\ge 3$. For $d=2$ one has $g_a=2\pi$ and thus is independent of $a>0$. The rate function can be computed explicitly in terms of the minimizer of a variational problem. Properties of the rate function are given in the aforementioned paper as well and we quote some of them in Appendix, see Theorem \ref{ThmPropRateFunct}.
Motivated by \cite{BBH}, similar results have been deduced for range of random walks: Phetpradap in his PhD thesis \cite{PP11} proved an equivalent version to Equation \eqref{BBHbound} by replacing $\texttt{volume}(W_t^a)$ with $\Rcal_n$ in the case of the simple symmetric random walk in $d\geq 3$:

\begin{theorem}\cite{PP11}
\label{PPthm}
Let $d\geq 3$ and $\Rcal_n$ be the range of the symmetric nearest neighbor random walk in $\Z^d$. For every $b>0$,
\begin{align}\label{discrete3}
\lim_{n\to\infty}\frac{1}{n^{\frac{d-2}{d}}}\log P\left(\Rcal_n\leq bn \right)=-\frac{1}{d}I^\kappa(b),
\end{align}
where
\begin{align*}
I^\kappa(b)=\inf_{\phi\in\Phi^\kappa(b)}\left\lbrack \frac{1}{2}\int_{\R^d} |\nabla\phi|^2(x)\mathrm{d}x \right\rbrack
\end{align*}
with
\begin{align*}
\Phi^\kappa(b)=\left\lbrace \phi\in H^1(\R^d)\colon \int_{\R^d} \phi^2(x) \mathrm{d}x=1,\;\int_{\R^d} (1-e^{-\kappa\phi^2(x)})\ \mathrm{d}x\leq b \right\rbrace\, ,
\end{align*}
where $H^1$ is the Sobolev space of square integrable functions with a weak first derivative. Here $\kappa$ is the non-return probability of random walk on $\Z^d$
$$
\kappa:=\kappa_d = P(S_i\neq 0\text{ for all } i\ge 1).
$$
\end{theorem}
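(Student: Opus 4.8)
The plan is to prove matching large--deviation upper and lower bounds by the Donsker--Varadhan route, organising the probabilistic input exactly as in the treatment of the Wiener sausage in \cite{BBH}. The governing picture is that on $\{\Rcal_n\le bn\}$ the walk must \emph{clump}: it should spend its $n$ steps inside a region of diameter of order $n^{1/d}$, so that after rescaling space by $n^{1/d}$ the empirical occupation measure
\[
L_n(\d x)=\frac1n\sum_{k=0}^{n}\delta_{S_k/n^{1/d}}(\d x)
\]
lies close to $\phi^2(x)\,\d x$ for some $\phi\in H^1(\R^d)$ with $\int\phi^2=1$. Two quantitative facts carry the argument: (i) producing such an occupation profile costs, on the exponential scale $n^{(d-2)/d}$, the amount $\tfrac1d\cdot\tfrac12\int|\nabla\phi|^2$ --- the Donsker--Varadhan rate for the empirical measure of the rescaled walk, the factor $\tfrac1d$ coming from the step covariance $\tfrac1d I_d$ of the nearest--neighbour walk; and (ii) a \emph{mesoscopic box estimate}: if the walk spends local time of density $\approx\ell$ in a box of side $\e n^{1/d}$, then the number of distinct sites visited there equals $|\mathrm{box}|\,(1-\ex^{-\kappa\ell})$ up to errors that vanish as $\e\downarrow0$ and $n\to\infty$, the transience in $d\ge3$ being responsible for the non--return factor $\kappa$.

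For the upper bound I would tile $\Z^d$ with mesoscopic boxes of side $\e n^{1/d}$ and bound $\Rcal_n$ from below by the sum over boxes of the number of distinct sites visited in each, then invoke (ii) together with a concentration estimate to show that $\{\Rcal_n\le bn\}$ forces $L_n$ close to the set $\{\psi\ge0:\int\psi=1,\ \int(1-\ex^{-\kappa\psi})\le b+o(1)\}$. Applying the Donsker--Varadhan upper bound to $L_n$ --- after a compactification step: confining the walk at negligible exponential cost to a box of side $Mn^{1/d}$, folding onto the corresponding torus, and sending $M\to\infty$ at the end --- and then letting $\e\downarrow0$, $n\to\infty$ and using lower semicontinuity of $\phi\mapsto\int|\nabla\phi|^2$, yields
\[
\limsup_{n\to\infty}\frac{1}{n^{(d-2)/d}}\log P(\Rcal_n\le bn)\le -\frac1d\inf_{\phi\in\Phi^\kappa(b)}\Big[\tfrac12\int_{\R^d}|\nabla\phi|^2\Big]=-\tfrac1d I^\kappa(b).
\]

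For the lower bound I would first use the regularity properties of $I^\kappa$ recorded in Theorem~\ref{ThmPropRateFunct} (continuity and monotonicity) to reduce to a near--minimiser $\phi\in\Phi^\kappa(b)$ that is smooth, compactly supported, and satisfies the volume constraint \emph{strictly}, $\int(1-\ex^{-\kappa\phi^2})<b$. I would then change measure, tilting the walk by a Doob--type $h$--transform built from $\phi$ (equivalently, estimating directly the relative entropy of the tilted path law with respect to $P$), in such a way that under the new law $L_n\to\phi^2(x)\,\d x$ in probability while the entropy cost is $n^{(d-2)/d}\big(\tfrac1d\cdot\tfrac12\int|\nabla\phi|^2+o(1)\big)$. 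Under the tilted law, estimate (ii) gives $\Rcal_n/n\to\int(1-\ex^{-\kappa\phi^2})<b$ in probability, so the tilted probability of $\{\Rcal_n\le bn\}$ tends to $1$; the entropy inequality then converts this into $\log P(\Rcal_n\le bn)\ge -n^{(d-2)/d}\big(\tfrac1d\cdot\tfrac12\int|\nabla\phi|^2+o(1)\big)$, and optimising over $\phi$ completes the proof.

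The hard part will be the mesoscopic box estimate (ii) and its uniformity over the relevant profiles. One must control the number and the durations of the walk's excursions in and out of a box, use $d\ge3$ transience in the sharp form $E[\Rcal_m]\sim\kappa m$ (with concentration) on the individual excursion segments to extract the factor $\kappa$, and then handle both the dependence between successive excursions and the boundary layer of a box --- where a site may be reached by excursions belonging to neighbouring boxes --- all with errors summable over the order--$\e^{-d}$ many boxes and vanishing as $\e\downarrow0$. A second delicate point, on the upper--bound side, is that a small range does not by itself confine the walk to scale $n^{1/d}$ (a thin, spread--out trace is another option, and the only cheap a priori bound is confinement to scale $n$), so the compactification and the truncation of $L_n$ to a bounded window require separate crude large--deviation estimates to rule out such alternatives.
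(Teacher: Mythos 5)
You should first note that the paper does not prove Theorem \ref{PPthm} at all: it is quoted from \cite{PP11}, whose argument follows the same scheme the paper itself executes in $d=2$ (Sections 3.1--3.6), namely compactification to a torus, a skeleton walk at times $i\e\Tt$, the representation of $\Ene[\Tt^{-1}\Rcal_n]$ as a functional of the \emph{pair} empirical measure through bridge non-hitting probabilities (Proposition \ref{sceletonLDP}, Proposition \ref{PropHard}, Lemma \ref{hardscalinglemma}), Donsker--Varadhan theory for that pair empirical measure (Proposition \ref{DV-Prop}), Talagrand's isoperimetric inequality to show $\Rcal_n$ is exponentially equivalent to its conditional expectation given the skeleton (Proposition \ref{apprxProp}), and finally Varadhan's lemma plus decompactification. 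Your outline replaces this with a single-time empirical measure $L_n$, a mesoscopic box estimate, and a tilting argument for the lower bound.

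The genuine gap is that your two load-bearing ingredients are precisely the statements that need proof, and the sketch supplies no mechanism for them. Your ``quantitative fact (ii)'' cannot hold as a statement about the occupation measure alone: the number of distinct sites visited in a box is not a function of the local time spent there, so ``$L_n$ close to $\phi^2(x)\,\d x$'' does not by itself force $\Rcal_n/n$ close to $\int(1-\ex^{-\kappa\phi^2})$. One needs (a) an identity for the \emph{expected} range given finer path data --- in the paper and in \cite{PP11,BBH} this is the conditional expectation given the skeleton, computed via bridge hitting probabilities, and this is exactly where $\kappa$ (respectively $2\pi$ in $d=2$) emerges from Green's-function/first-hitting asymptotics --- and (b) a concentration statement reducing $\Rcal_n$ to that conditional expectation, which the paper obtains from Talagrand's inequality, crucially exploiting that the pieces $\Wcal_i$ are independent once the skeleton is fixed. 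Your upper bound's unspecified ``concentration estimate'' and your lower bound's claim that $\Rcal_n/n\to\int(1-\ex^{-\kappa\phi^2})$ under the tilted law both tacitly assume this machinery; moreover, the contraction/Varadhan step requires the range functional to be an (approximately) bounded continuous function of the empirical object, which is precisely why the proof passes through $\Ene[\Tt^{-1}\Rcal_n]$ and the pair empirical measure of the skeleton rather than through $L_n$ directly. As it stands the proposal is a plausible plan whose hardest steps are asserted rather than proved; filling them in would essentially reconstruct the skeleton-plus-Talagrand route of \cite{PP11} and of this paper.
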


Our work closes the gap and gives a large deviation type bound for the range of a random walk in the case $d=2$. 
In order to prove the equivalent result for $d=2$, our work is significantly inspired by the previous two publications. However, in contrast to the assumptions in \cite{PP11}, we only require moment bounds on $X_i$. We need additional approximations in comparison to \cite{BBH} to account for the discreteness of $\Rcal_n$. Although this paper is written for the case $d=2$, the methods carry over to $d\ge 3$ (although in that case one has to assume the finiteness of the random walk moment generating function on $\R^d$) and thus slightly generalize the above given statement from \cite{PP11} to more general random walks. With our techniques one can also prove a large deviation result for the intersection of random walks, see the note \cite{Qu20}.\\
Unlike in the case $d\geq 3$ in \cite{PP11}, the a priory lack of boundedness of $\Rcal_n/\Tt$ leads to difficulties in proof, e.g. controlling the moment generating function of $\Rcal_n/\Tt$. We make use of recent results, in particular \cite{uchiyama2011} and \cite{chen2010random} in the course of tackling this issue. 

Lower bounds for Equation \eqref{BBHbound} on the scale of the mean for the Equation \eqref{BBHbound} as well as in the discrete case \eqref{discrete3} have been obtained in \cite{HK01} for $d\ge 3$. Not only does the probability decay on a different scale, but also their techniques differ widely from ours. We thus refer the reader to the paper for more details. It is work in progress to generalize our results to the range of a random walk bridge in the spirit of \cite{HW88} and \cite{hamana2006}.

We organize our paper as follows: In Section \ref{sctn:MainRslt}, we state our large deviation bound on the range of the planar random walk. In the next section, we give the proof, which is organized across different subsections: We begin with a basic large deviation result building on the Donsker-Varadhan theory and then, through a series of approximations and compactifications, arrive at the main result. In the Appendix, we prove various technical lemmas regarding the random walk on the torus. The different approximations, domains and scalings lead to numerous notations, in order to make it easier to follow these, in Section \ref{glossary} we have compiled a list of all the different expressions used.

\section{Main Result and Setting}\label{sctn:MainRslt}
Let $\Rcal_n$ be the range of a random walk $S_n=\sum_{i=1}^n X_i$, where $X_i=(X_i^1,X_i^2)$ are i.i.d. symmetric random variables on $\Z^2$. Denote the distribution of this random walk started at $0$ by $P_0$ and its expectation by $E_0$. We make the following assumptions on $(X_i)_{1\leq i\leq n }$:
\begin{itemize}
    \item  \textbf{Normalization}:
the increments $(X_k)_k$ have mean 0 and the identity as covariance, i.e.\ $E[X^{i}_1X^{j}_1]=\delta_{i}(j)$ for all $1\leq i,j \leq 2$, where $X^{i}_1$ denotes the $i$-th coordinate of the increment $X_1$.
\item  
\textbf{Bounded moments:} let $H\colon [0,\infty)\to (0,\infty)$ be a continuous and increasing function satisfying
\begin{equation}\label{MomentAss}
    \lim_{n\to\infty}\frac{1}{\log n}\log{H(n)}=\infty\, .
\end{equation}
The technical assumption that $H(t)t^{-3-s}$ is eventually increasing (for some $s>0$) and $t^{-1/2}\log H(t) $ is eventually non-increasing, is also needed. We then require that 
$$
E\left[H\left(\abs{X_1}\right)\right]<\infty\, ,
$$
for at least one such $H$.

\item 
\textbf{{Random walk}:}
the random walk $(S_n)_n$ is aperiodic.
\end{itemize}

\begin{remark}
For those $H$ mentioned in moments condition, one can take for example $H(n)=\exp\left(\log^r(n+1)\right)$ with $r>1$. Then, any aperiodic random walk with mean zero, identity as covariance and $\E[\exp\left(\log^r(\abs{S_1}+1)\right)]<\infty$ would satisfy those conditions.
\end{remark}
We now abbreviate the scales which will be used throughout the paper
\begin{align*}
\tau&=\log n,\\
T_\tau&= n/\log n.
\end{align*}

We remind the reader that $\Rcal_n$ is the number of distinct vertices the random walk has visited up to time $n$. The following limit is our main result:

\begin{theorem}
\label{thm_main}
Let $d=2$. For every $b>0$,
\begin{align*}
\lim_{n\to\infty}\frac{1}{\tau}\log P_0 \left(\Rcal_n\leq b\Tt \right)=-I(b),
\end{align*}
where
\begin{align}\label{def_ratefct}
I(b)=\inf_{\phi\in\Phi^{2\pi}(b)}\left\lbrack \frac{1}{2}\int_{\R^2} |\nabla\phi|^2(x)\mathrm{d}x \right\rbrack\, ,
\end{align}
with
\begin{align}\label{def_domainInf}
\Phi^{2\pi}(b)=\left\lbrace \phi\in H^1(\R^2)\colon \int_{\R^2} \phi^2(x) \mathrm{d}x=1,\;\int_{\R^2} (1-e^{-2\pi\phi^2(x)})\ \mathrm{d}x\leq b \right\rbrace.
\end{align}
\end{theorem}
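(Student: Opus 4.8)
\emph{Overall strategy.} The plan is to prove the matching upper and lower bounds separately, in both cases reducing the range $\Rcal_n$ to a bounded functional of the (spatially rescaled) occupation measure of the walk and then invoking Donsker--Varadhan large deviation theory. The natural rescaling is spatial: one compresses $\Z^2$ by the factor $\Ttt=(n/\log n)^{1/2}$, so that the $n=\tau\,T_\tau$ steps of $(S_k)_k$ correspond, on the rescaled lattice, to roughly $\tau$ traversals of an order-one box, and the empirical measure $L_n:=\frac{1}{T_\tau}\sum_{k\le n}\delta_{S_k/\Ttt}$ is a probability measure on $\R^2$ whose large deviations are governed, via the invariance principle, by the Brownian occupation rate function $\psi\mapsto \tfrac12\int_{\R^2}\abs{\nabla\sqrt\psi}^2$. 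The heuristic linking this to the statement is that, on scale $\Ttt$, a mesoscopic box at rescaled location $x$ carrying occupation density $\psi(x)=\phi^2(x)$ is covered by the walk with probability $\approx 1-\ex^{-2\pi\phi^2(x)}$, the constant $2\pi$ being exactly the one appearing in the asymptotics of the planar potential kernel (equivalently, in $E_0[\Rcal_n]\sim 2\pi\,T_\tau$ from \eqref{meanEq}); this is where aperiodicity and the moment condition on $X_1$, together with the potential-kernel estimates of \cite{uchiyama2011}, enter. Consequently $\Rcal_n/T_\tau\approx\int_{\R^2}(1-\ex^{-2\pi\phi^2})$, while forcing the profile $\phi$ costs $\ex^{-\tau(\frac12\int\abs{\nabla\phi}^2+o(1))}$, and optimizing over $\phi\in\Phi^{2\pi}(b)$ yields $I(b)$.

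\textbf{Lower bound.} First I would fix a smooth, compactly supported $\phi\in\Phi^{2\pi}(b)$ with $\int_{\R^2}(1-\ex^{-2\pi\phi^2})<b$ strictly (a regularisation/density argument in $H^1$ handles the boundary case $\int(1-\ex^{-2\pi\phi^2})=b$). Then I would build a change of measure that confines the walk to a large box of side $\propto\Ttt$ and tilts it so that $L_n$ lies within $\delta$, in a suitable weak metric, of $\phi^2\,\d x$; by the Donsker--Varadhan lower bound for the occupation measure of the rescaled walk this event has probability at least $\ex^{-\tau(\frac12\int\abs{\nabla\phi}^2+\eta(\delta))}$ with $\eta(\delta)\to0$ as $\delta\downarrow0$. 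On this event, a first/second-moment estimate for the number of covered sites in each mesoscopic box, using the potential-kernel asymptotics, shows $\Rcal_n\le b\,T_\tau$ with probability tending to $1$. Letting $\delta\downarrow0$ and then taking the infimum over admissible $\phi$ gives $\liminf_{n}\tfrac1\tau\log P_0(\Rcal_n\le b\,T_\tau)\ge -I(b)$.

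\textbf{Upper bound.} This is the harder half and the point where the argument genuinely departs from \cite{PP11}. I would coarse-grain $\Z^2$ into boxes of side $\eps\Ttt$ and fold the walk onto a torus $\Tcal_N$ of side $N\asymp\Ttt$: folding only decreases the range, so an upper bound for the torus range suffices, and the torus supplies both the compactness needed to upgrade a weak large deviation estimate and the spectral setting for the Donsker--Varadhan upper bound. On $\Tcal_N$ the range is comparable to $T_\tau$ times a bounded, continuous functional of the (folded, rescaled) occupation measure that converges to $\psi\mapsto\int(1-\ex^{-2\pi\psi})$; combining the occupation-measure upper bound with the continuity of this functional and then sending $\eps\downarrow0$ and tuning $N/\Ttt$ produces $\limsup_n\tfrac1\tau\log P_0(\Rcal_n\le b\,T_\tau)\le -I(b)$. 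I expect the principal obstacle to be the exponential Chebyshev step: since in $d=2$ there is \emph{no} deterministic bound $\Rcal_n\le \mathrm{const}\cdot T_\tau$, controlling the moment generating function $E_0\!\left[\ex^{\theta\Rcal_n/T_\tau}\right]$ is delicate and must be extracted from tail bounds for $\Rcal_n$ valid under the stated moment hypothesis on $X_1$ (via \cite{uchiyama2011} and \cite{chen2010random}); this is precisely what dictates the technical form of the conditions on $H$. A secondary layer is the truncation of increments and the invariance principle needed to transfer the Brownian rate function to the lattice walk, together with matching the lattice constant $2\pi$ with the continuum value $g_a=2\pi$ of \cite{BBH}.
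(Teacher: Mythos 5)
Your overall architecture (spatial rescaling by $\Ttt$, compactification onto a torus of fixed macroscopic size, Donsker--Varadhan theory at speed $\tau$, the constant $2\pi$ extracted from planar hitting-time/potential-kernel asymptotics, and the flagged difficulty with the moment generating function of $\Rcal_n/\Tt$) matches the paper's in spirit. But there is a genuine gap at the central step of both of your bounds: you treat $\Rcal_n/\Tt$ as (asymptotically) a bounded continuous functional of the single-time occupation measure of the rescaled walk, and then invoke contraction/Varadhan. This is exactly what is \emph{not} true and where the real work lies: the occupation measure does not determine the coverage, since two paths with nearly identical occupation profiles can have very different ranges, so no continuity statement of the form ``$\Rcal_n/\Tt\approx F(L_n)$ with $F$ continuous'' is available. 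The paper's resolution (following \cite{BBH}) has three ingredients that are absent from your proposal: (i) one conditions on the skeleton $\Scal_{n,\e}=\{S_{i\e\Tt}\}$ and works with the \emph{pair} empirical measure $L_{n,\e}$ of consecutive skeleton points, for which Proposition \ref{DV-Prop} gives an LDP with rate $\e^{-1}I^{(2)}_\e$ via a KMT-type coupling (this is where the assumption \eqref{MomentAss} on $H$ is used); (ii) the \emph{conditional expectation} $\Ene[\Rcal_n/\Tt]$, after cutting small holes of radius $Q_\tau$, is shown to be uniformly close to the continuous functional $\Phi_{1/\e}(L_{n,\e})$, the constant $2\pi$ entering through $\tau b_{n,\e}\to 2\pi\phi_\e$ (Proposition \ref{PropHard}, Lemma \ref{hardscalinglemma}, via \cite{uchiyama2011}); (iii) a Talagrand concentration argument (Proposition \ref{apprxProp}), together with the bridge moment bound of Lemma \ref{brdgeLmma} and \cite[Theorem 6.3.1]{chen2010random}, shows that $\Rcal_n/\Tt$ is exponentially equivalent at speed $\tau$ to $\Ene[\Rcal_n/\Tt]$. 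Your ``first/second-moment estimate for the number of covered sites in each mesoscopic box'' and ``comparable to a bounded continuous functional of the occupation measure'' are stand-ins for (ii)--(iii), but as stated they would not deliver the required uniformity or the super-exponential (speed-$\tau$) concentration, and conditioning on the occupation profile alone does not even fix the conditional mean coverage --- the pair/bridge structure is needed.

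Two smaller points. The object you call the empirical measure, $L_n=\frac{1}{T_\tau}\sum_{k\le n}\delta_{S_k/\Ttt}$, has total mass $\tau$, not $1$, so the DV rate function you invoke does not apply to it as written; the correct normalisation is $1/n$, or better, the skeleton pair measure above. Also, for the de-compactification the paper does not tilt to an explicit profile $\phi$: the lower bound is obtained by conditioning the folded walk on never hitting the boundary of $[-N/2,N/2)^2$ (an event of cost $o(\tau)$ after $N\to\infty$) and reusing the torus LDP, while the passage $J_N\to I$ is quoted from \cite[Proposition 2]{BBH}; your explicit-tilt lower bound could in principle work, but it again requires the conditional concentration step you have not supplied.
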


\begin{remark}\label{remark1}
 \begin{enumerate}
 \item 
Note that the factor $\kappa$ in Theorem \ref{PPthm} is replaced by a multiple of $\pi$, analogous to the behavior of the mean in \eqref{meanEq}. The factor $2\pi$ is consequence of Lemma \ref{hardscalinglemma}, whose proof uses a computation by \cite{uchiyama2011}, involving the two-dimensional potential kernel of the random walk. 
\item 
In \cite{PP11} the author works with the simple random walk, which has covariance matrix identity times $1/d$. Hence the factor $1/d$ in Theorem \ref{PPthm}. As we work with the identity as covariance, no such factor appears in the above theorem. 
\item
The moment condition in Equation \eqref{MomentAss} is required for the proof of Proposition \ref{DV-Prop}. There, an LDP for the pair empirical measure of the random walk is established at a speed that is faster than any polynomial. This is where the decay assumption from Equation \eqref{MomentAss} is needed. In the rest of the proof, assuming the existence of finite moments is sufficient, although, in order to get a "good" approximation of the random walk density on the torus, assuming that the second moment is finite does not suffice (see the proof of Lemma \ref{BscBrwnApprxLemma}).
\end{enumerate}
\end{remark}


{
The following scaling limit of negative exponential moments is an application of Theorem \ref{thm_main} and Theorem \ref{ThmPropRateFunct} (in the Appendix). It follows from Sznitman's enlargement of obstacles method, see \cite[page 213-214]{sznitman1998brownian} (also \cite[page 362]{BBH}).
\begin{cor}
Let $c>0$, then
\begin{equation}
    \lim_{n\to\infty}\frac{1}{\tau}\log E_0\left[\ex^{-c\frac{\tau}{\Tt}\Rcal_n}\right]=-\inf_{b>0}\left[bc+I(b)\right]\, .
\end{equation}
\end{cor}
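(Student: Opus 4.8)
The plan is to derive the corollary from Theorem~\ref{thm_main} by a discretized Laplace-principle (Varadhan-type) argument. The key observation is that, writing $Y_n:=\Rcal_n/\Tt$ so that $c\frac{\tau}{\Tt}\Rcal_n=c\tau Y_n$, the test function $t\mapsto\ex^{-c\tau t}$ is bounded and decreasing; consequently $E_0[\ex^{-c\tau Y_n}]\le 1$, and only the lower-tail estimate $\frac1\tau\log P_0(\Rcal_n\le b\Tt)\to-I(b)$ of Theorem~\ref{thm_main} is needed — in particular no control of the (a priori unbounded) upper tail of $\Rcal_n/\Tt$ is required, which is what makes the argument short. I would start from the layer-cake identity
\begin{equation*}
E_0\!\left[\ex^{-c\tau Y_n}\right]=\int_0^\infty c\tau\,\ex^{-c\tau t}\,P_0(Y_n\le t)\,\d t,
\end{equation*}
obtained by writing $\ex^{-c\tau Y_n}=\int_{Y_n}^\infty c\tau\,\ex^{-c\tau t}\,\d t$ and using Fubini (recall $Y_n\ge0$).

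For the lower bound, fix $b>0$ and estimate $E_0[\ex^{-c\tau Y_n}]\ge\ex^{-c\tau b}\,P_0(Y_n\le b)$; by Theorem~\ref{thm_main}, $\liminf_{n\to\infty}\frac1\tau\log E_0[\ex^{-c\tau Y_n}]\ge -cb-I(b)$, and optimizing over $b>0$ gives the lower bound $-\inf_{b>0}[cb+I(b)]$. For the upper bound, fix $M>0$ and split the integral at $M$: the contribution of $[M,\infty)$ is at most $\int_M^\infty c\tau\ex^{-c\tau t}\,\d t=\ex^{-c\tau M}$, while on $[0,M]$ I would take a partition $0=t_0<t_1<\dots<t_k=M$ of mesh at most $\eta$ and use $\ex^{-c\tau t}\le\ex^{-c\tau t_{i-1}}$ together with $P_0(Y_n\le t)\le P_0(Y_n\le t_i)$ on each $[t_{i-1},t_i]$ to get
\begin{equation*}
\int_0^M c\tau\,\ex^{-c\tau t}\,P_0(Y_n\le t)\,\d t\le c\tau\eta\sum_{i=1}^k\ex^{-c\tau t_{i-1}}\,P_0(Y_n\le t_i).
\end{equation*}
Applying Theorem~\ref{thm_main} at the finitely many points $t_1,\dots,t_k$, using $\frac1\tau\log(c\tau\eta k)\to0$ and the elementary bound $ct_{i-1}+I(t_i)\ge\inf_{t\in[0,M]}(ct+I(t))-c\eta$, the $\limsup$ as $n\to\infty$ of $\frac1\tau$ times the logarithm of the left-hand side is at most $-\inf_{t\in[0,M]}(ct+I(t))+c\eta$. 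Collecting the two pieces, then letting $\eta\downarrow0$ and $M\uparrow\infty$, I obtain $\limsup_{n\to\infty}\frac1\tau\log E_0[\ex^{-c\tau Y_n}]\le-\inf_{b>0}[cb+I(b)]$, which together with the lower bound completes the proof.

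The only genuinely delicate point, and the place where Theorem~\ref{ThmPropRateFunct} enters, is the limit $M\uparrow\infty$: one needs $\inf_{b>0}[cb+I(b)]$ to be finite and to coincide with $\inf_{b\in(0,M]}[cb+I(b)]$ for all sufficiently large $M$, so that the truncation term $\ex^{-c\tau M}$ (whose logarithmic rate $-cM$ tends to $-\infty$) is eventually negligible against $-\inf_{b>0}[cb+I(b)]$. Both facts follow from the properties of $I$ recorded in Theorem~\ref{ThmPropRateFunct}: $I\ge0$ (so $cb+I(b)\to\infty$ as $b\to\infty$, giving the required coercivity) and $I(b)<\infty$ for all large $b$ (indeed $I$ vanishes there). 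Everything else is the standard discretization of the Laplace principle, performed exactly as in \cite[pp.~213--214]{sznitman1998brownian} and \cite[p.~362]{BBH}.
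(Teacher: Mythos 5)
Your argument is correct and is exactly the standard passage from the lower-tail LDP of Theorem \ref{thm_main} to negative exponential moments (lower bound by restriction to $\{\Rcal_n\le b\Tt\}$, upper bound by truncation and a finite discretization, with Theorem \ref{ThmPropRateFunct} supplying finiteness of $\inf_{b>0}[bc+I(b)]$ and the coercivity needed to remove the truncation), which is precisely the argument the paper invokes by citing Sznitman and van den Berg--Bolthausen--den Hollander rather than writing it out. So the proposal matches the paper's approach, with the details filled in.
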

Our result also gives an easy proof\footnote{One uses Jensen's inequality to bound $E_0\left[\Rcal^k_n\right]$ from above and Theorem \ref{thm_main} with Theorem \ref{ThmPropRateFunct} for the lower bound.} of certain moment-asymptotics of $\Rcal_n$.
\begin{cor}
For $k\in (0,1)$, we have that
\begin{equation}
    E_0\left[\Rcal^k_n\right]\sim \left(\frac{n}{\log n}\right)^k\, ,
\end{equation}
where by $\sim$ we mean that there exist universal constants $C_1,C_2>0$ (only depending on $k$) such that the left-hand side is bounded from above (resp. below) by $C_1$ (resp. $C_2)$ times the right-hand side.
\end{cor}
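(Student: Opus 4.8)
The plan is to prove the two bounds separately, exactly as the footnote suggests.

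\emph{Upper bound.} First I would use Jensen's inequality: since $t\mapsto t^k$ is concave on $[0,\infty)$ for $k\in(0,1)$, one has $E_0[\Rcal_n^k]\le (E_0[\Rcal_n])^k$. Then I would insert the classical first-moment asymptotics \eqref{meanEq} in dimension $d=2$ — which hold for any mean-zero, identity-covariance walk and hence in our setting — namely $E_0[\Rcal_n]\sim 2\pi\, n/\log n=2\pi\,\Tt$. This gives $E_0[\Rcal_n^k]\le (2\pi)^k\Tt^k(1+o(1))$, and absorbing the $o(1)$ (and the finitely many small values of $n$) into the constant yields $E_0[\Rcal_n^k]\le C_1\Tt^k$ for a universal $C_1=C_1(k)$.

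\emph{Lower bound.} For any fixed $b>0$ I would bound, for every $n$,
\[ E_0[\Rcal_n^k]\ \ge\ E_0\big[\Rcal_n^k\,\1_{\{\Rcal_n>b\Tt\}}\big]\ \ge\ (b\Tt)^k\,P_0(\Rcal_n>b\Tt), \]
so it only remains to show that $P_0(\Rcal_n>b_0\Tt)$ stays bounded away from $0$ for some fixed $b_0>0$. To that end I would first fix, using Theorem \ref{ThmPropRateFunct}, a value $b_0>0$ with $I(b_0)>0$ (the rate function is strictly positive below its critical value; if a self-contained argument is preferred, the Gagliardo--Nirenberg inequality $\|\phi\|_{L^4}^4\le C\|\phi\|_{L^2}^2\|\nabla\phi\|_{L^2}^2$ gives, for $\phi\in\Phi^{2\pi}(b)$, that $b\ge\int_{\R^2}(1-\ex^{-2\pi\phi^2})\ge 2\pi-2\pi^2 C\|\nabla\phi\|_{L^2}^2$, whence $I(b)\ge(2\pi-b)/(4\pi^2C)>0$ for $b<2\pi$). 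Then Theorem \ref{thm_main} gives $P_0(\Rcal_n\le b_0\Tt)=\exp(-(\log n)(I(b_0)+o(1)))\to 0$, so $P_0(\Rcal_n>b_0\Tt)\ge\tfrac12$ for all large $n$, and therefore $E_0[\Rcal_n^k]\ge\tfrac12 b_0^k\Tt^k$ for all large $n$. Since $\Rcal_n\ge1$ always and $\Tt\le n$, the remaining finitely many $n$ are dealt with by shrinking the constant, producing a universal $C_2=C_2(k)$ with $E_0[\Rcal_n^k]\ge C_2\Tt^k$ for all $n\ge2$.

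\emph{Main obstacle.} I expect essentially none: the corollary is a repackaging of Theorem \ref{thm_main}. The upper bound is pure Jensen plus a classical asymptotic, and the lower bound needs only the qualitative statement that $\Rcal_n/\Tt$ does not fall below a fixed positive level with non-vanishing probability — a weak law of large numbers for the range would already suffice, so the full large deviation principle is more than is needed. The single point genuinely requiring an external input (or the short analytic argument above) is the strict positivity $I(b_0)>0$ for some $b_0>0$, i.e.\ that the variational problem \eqref{def_ratefct} has positive value for small $b$; beyond that, the only remaining work is the bookkeeping needed to convert the two asymptotic inequalities into bounds with universal constants valid for all $n\ge2$.
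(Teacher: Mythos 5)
Your proposal is correct and follows essentially the same route as the paper, whose proof is only the footnote: Jensen's inequality together with the mean asymptotics \eqref{meanEq} for the upper bound, and Theorem \ref{thm_main} combined with Theorem \ref{ThmPropRateFunct} (positivity of $I(b_0)$ for some $b_0>0$) to keep $P_0(\Rcal_n>b_0\Tt)$ bounded away from zero for the lower bound. Your self-contained Gagliardo--Nirenberg argument for $I(b)>0$ when $b<2\pi$ is a harmless addition beyond what the paper invokes.
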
{}
}

\section{Proof of Theorem \ref{thm_main}}
The proof of Theorem \ref{thm_main} is obtained through a series of approximations. The overall structure follows the approach by \cite{BBH} and we prove several discrete analogues of their intermediate results. We begin with stating preliminary facts about the random walk on the torus in Section \ref{Compatification}. The next three sections all deal with the compactified problem, studying the range of a random walk on a finite torus. We use a coupling with the Brownian motion to apply Donsker-Varadhan theory in Section \ref{DskrVrdhSubsctn}. We then show an LDP for an approximation of the original random walk in Section \ref{LDPskeletonWalk}. 
This is the most technical part. In Section \ref{sctnApprxo} we use Talagrand's inequality to show that the error from the previous approximation is negligible on the scale of the LDP. In the last section we remove the torus restriction and finish the proof of Theorem \ref{thm_main}.

\subsection{Random walk on torus}
\label{Compatification}
We fix $N>0$ and then denote the continuum torus of length $N$ by $\L_N=[-N/2,N/2)^2$. Let $\Dt=\L_N\cap \Tt^{-1/2}\Z^2$, the rescaled periodic lattice (imposing periodic boundary conditions). Define for $x\in\R^2$
\begin{align}
\label{tranferLattice1}
& x^+=\floor{x\Ttt}\, ,\\
\label{tranferLattice2}
& x^-=\floor{x\Ttt}\Tt^{-1/2}\, .
\end{align}
Suppose that $X_i$ are i.i.d random variables on $\Z^2$ with all moments being finite.

Let $P_x,E_x$ denote the measure and expectation of the planar random walk on $\Z^2$ started at $x$, given by the sum of the $X_i$'s. For $x\in\L_N$, we write $\Pn_x,\En_x$ the measure of that random walk projected onto $\Dt$, where we always implicitly use rounding: $\Pn_x$ is to be understood as $\Pn_{x^{-}}\circ \texttt{M}^{-1}$, where $\texttt{M}$ is the map on the path space sending each space time point $(x,t)\mapsto (x^{-},t)$ and analogous for $\En_x$. The random walk under $\Pn_x$ is rescaled in space and converges to the Brownian motion if we stretch time by $\Tt$.

For $x,y\in \Z^2$, let $p_t(x,y)$ be the transition kernel from the point $x$ to the point $y$ in time $t$, associated to $P_x$. For $x,y\in\Dt$, we write $\pn_t(x,y)$ for the kernel associated to $\Pn_x$. Moreover, denote $(S_t)_{t\ge 0}$ the family of coordinate projections onto the respective space (where we interpret $S_t=S_{\floor{t}}$ for $t\notin \N$). \\
Note that the transition probability of random walk on torus is then given by an infinite sum of transition probabilities of random walk on $\Z^2$, i.e.\ for $x,y\in\Dt$
\begin{align}\label{tranPDeltaN}
\pn_t(x,y)
=&\Pn_{x}(S_t=y)=\sum_{z\in \Z^2}P_{x^+}(S_t=y^+ +{z^+}N) \nonumber\\
=&\sum_{z\in \Z^2}p_t(x^+,y^++{z^+}N)\, .
\end{align}
Along the path $x\mapsto x^-$, above formula of the transition probability can be extended for $x,y\in \L_N$ by $\pn_t(x,y)=\pn_t(x^-,y^-)$. 

We denote the measure of the Brownian paths on $\R^2$ started at $x$ by $\P_x$ and $\P_x^{(N)}$ for the Brownian motion projected onto $\L_N$.
The Brownian transition kernel from the point $x$ to the point $y$ at time $t$ is given by
$$
\p_t(x,y)=\frac{1}{2\pi t}\exp\left( -\frac{\abs{y-x}^2}{2t} \right).
$$ 
Similarly for $x,y\in \L_N$, denote the Brownian transition kernel on the torus $\L_N$ by 
\begin{align*}
\p^{(N)}_t(x,y)
= \sum_{z\in\Z^2}\p_t(x,y+Nz)\, .
\end{align*}
Note that both $\pn_t(x,y)$ and $\p^{(N)}_t(x,y)$ again depend on $N,n$. Furthermore, by the local central limit Theorem \cite[Theorem 2.1.1]{Lawler2010}, one can approximate the transition probability with the Brownian kernel. For $x,y\in\Z^
2$,
\begin{equation}\label{LCLT}
\begin{split}
p_{t}(x,y)
=& \p_t(x,y) + A_t(x,y)\\
=& \frac{1}{2\pi t}\exp\left(-\frac{|x-y|^2}{2t}\right) + A_t(x,y)\, ,
\end{split}
\end{equation}
where
\begin{equation*}
A_t(x,y)=\min\Big\lbrace \Ocal(t^{-2}),\Ocal\left(|x-y|^{-2}t^{-1}\right)\Big\rbrace\, .
\end{equation*}
On the rescaled lattice $\Dt$, a similar result carries over for $\pn_t(x,y)$ with $x,y\in\L_N$, see Lemma \ref{BscBrwnApprxLemma}.
\begin{remark}
Note that whether $(S_t)_t$ lives on $\Z^2$ or the rescaled torus $\Dt$ is indicated by the reference measure. If it is $\Pn_x$, then the random walk lives on the rescaled lattice, otherwise on $\Z^2$. We do not use the notation $S^{-}_t$ for that reason.
\end{remark}

\subsection{Donsker-Varadhan LDP}
\label{DskrVrdhSubsctn}
In this subsection, we introduce an empirical functional based on the random walk and show a large deviation principle for it. The key will be the so called Donsker-Varadhan theory (see \cite{DV75}). 

Define the rescaled empirical measure $L_{n,\e}$ on $\Mcal_1(\L_N\times\L_N)$, the space of probability measures on $(\L_N\times\L_N$), by
\begin{equation}\label{empP}
    L_{n,\e}=\frac{\e }{\tau}\sum_{i=1}^{ \tau/\e}\delta_{\left(S_{(i-1)\e \Tt},S_{i\e \Tt} \right)}\, ,
\end{equation}
where the random walk $(S_i)_i$ lives on the rescaled torus $\Dt$. Let $I_\e^{(2)}\colon \Mcal_1(\L_N\times \L_N)\to [0,\infty)$ be the entropy function which is defined in the following way
\begin{equation}\label{entropyFNCT}
    I_\e^{(2)}(\mu)=\begin{cases}
     h(\mu|\mu_1\otimes\p^{(N)}_\e) &\text{ if }\mu_1=\mu_2\, ,\\
    +\infty &\text{ otherwise}\, ,
    \end{cases}
\end{equation}
where $\p^{(N)}_\e=\p^{(N)}_\e(y-x)\d y$ is the measure induced by the Brownian transition kernel($x$ is with respect to $\mu_1$). Furthermore, $h$ denotes the usual entropy between two measures and $\mu_i$ is the $i$-th marginal of $\mu$ for $i=1,2$.

\begin{proposition}\label{DV-Prop}
Under $\Pn_x$ the empirical functional $L_{n,\e}$ satisfies an LDP with speed $\tau$ and good rate function $\e^{-1}I_{\e}^{(2)}$ (in the weak topology), where $I^{(2)}_\e$ was defined in Equation \eqref{entropyFNCT}.
\end{proposition}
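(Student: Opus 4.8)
\emph{Proof proposal.} The plan is to deduce the statement from the classical Donsker--Varadhan large deviation principle for the pair empirical measure of a uniformly ergodic Markov chain, applied to the time-$\e$ skeleton of Brownian motion on the torus $\L_N$, and then to transfer it to the random walk by a coupling argument. The point is that $L_{n,\e}$ is the pair empirical measure of the Markov chain $Y_i^{(n)}=S_{i\e\Tt}$ (living on $\Dt$), whose transition kernel $\pn_{\e\Tt}$ depends on $n$; the moment assumption on $X_1$ will be used only in the transfer step.

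\emph{Step 1 (the Brownian skeleton LDP).} Let $(Y_i)_{i\ge0}$ be the Markov chain on $\L_N$ with one-step transition kernel $\p^{(N)}_\e(x,\d y)=\p^{(N)}_\e(y-x)\,\d y$, i.e.\ the time-$\e$ skeleton of Brownian motion on the torus. For fixed $\e>0$ and $N$ the density $\p^{(N)}_\e(x,y)$ is continuous and bounded above and below by strictly positive constants, uniformly in $x,y\in\L_N$, and $\L_N$ is compact; hence the chain meets the hypotheses of the Donsker--Varadhan theory \cite{DV75}, and its pair empirical measure $\tfrac1m\sum_{i=1}^m\delta_{(Y_{i-1},Y_i)}$ satisfies an LDP on $\Mcal_1(\L_N\times\L_N)$, in the weak topology, with speed $m$ and good rate function $\mu\mapsto h(\mu|\mu_1\otimes\p^{(N)}_\e)$ if $\mu_1=\mu_2$ and $+\infty$ otherwise; by uniform ergodicity this rate function does not depend on the (deterministic) starting point. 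Taking $m=\tau/\e$ (rounded to an integer) and rescaling the speed by $\e$, the random measure $\tfrac{\e}{\tau}\sum_{i=1}^{\tau/\e}\delta_{(M_{(i-1)\e},M_{i\e})}$ satisfies an LDP with speed $\tau$ and good rate function $\e^{-1}I_\e^{(2)}$. Exponential tightness is automatic because $\Mcal_1(\L_N\times\L_N)$ is weakly compact.

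\emph{Step 2 (coupling and conclusion).} By Brownian scaling, $(\Tt^{-1/2}M_{t\Tt})_{t\ge0}$ is again a planar Brownian motion, and evaluated at $t=i\e$ its projection onto $\L_N$ is precisely the chain of Step 1; so it suffices to compare $L_{n,\e}$ — built from the skeleton $(\Tt^{-1/2}S_{i\e\Tt})_i$ on $\Dt$ — with the pair empirical measure of that projected Brownian skeleton. I would realize the $\Z^2$-walk underlying $\Pn_x$ and a planar Brownian motion $(M_t)$ on a common probability space via a strong approximation (a multivariate Komlós--Major--Tusnády coupling, or a blockwise Skorokhod-type embedding started at $x^+\approx x\Ttt$). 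The hypotheses $\lim_n\log H(n)/\log n=\infty$ and $E[H(\abs{X_1})]<\infty$ force $\abs{X_1}$ to have finite moments of all polynomial orders, which is far more than enough to yield $\P\big(\max_{k\le n}\abs{S_k-M_k}>n^{\beta}\big)\le C_{A,\beta}\,n^{-A}$ for every $\beta>0$ and every $A>0$. Since $i\e\Tt\le\tau\Tt=n$ for $i\le\tau/\e$ and $\Tt^{1/2}=\Ttt=\sqrt{n/\tau}$, choosing $\beta<1/2$ gives $\max_{1\le i\le\tau/\e}\big|\Tt^{-1/2}S_{i\e\Tt}-\Tt^{-1/2}M_{i\e\Tt}\big|\le n^{\beta-1/2}\sqrt\tau\to0$ off an event of probability $\le C_A n^{-A}$ for every $A$. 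As projection $\R^2\to\L_N$ is $1$-Lipschitz for the quotient metric, the pair empirical measures of the two projected skeletons then differ by at most any prescribed $\delta>0$ in the L\'evy--Prokhorov metric, again off an event of probability $\le C_A n^{-A}$ for every $A$. Because $\tau=\log n$, one has $\limsup_n\tau^{-1}\log(C_A n^{-A})=-A$, hence $-\infty$ on letting $A\to\infty$; the two families of random measures are therefore exponentially equivalent, and since exponentially equivalent families obey the same LDP, $L_{n,\e}$ under $\Pn_x$ satisfies the asserted LDP with speed $\tau$ and good rate function $\e^{-1}I_\e^{(2)}$. (Bookkeeping: the walk starts from $x^-\in\Dt$ with $\abs{x-x^-}=O(\Tt^{-1/2})\to0$, which is absorbed into the coupling step, and $\tau/\e$ is read as its integer part.)

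\emph{Main obstacle.} The delicate point is Step 2: one must couple so that the error is small \emph{uniformly} over the roughly $\log n$ observation times and so that the failure probability decays faster than $e^{-C\tau}=n^{-C}$ for \emph{every} $C$, and this is exactly where the super-polynomial moment bound \eqref{MomentAss} enters — exponential moments of $X_1$, which would give the sharpest KMT-type estimates, are not assumed here, so one has to argue with polynomial tail bounds, which happen to be amply sufficient on the logarithmic scale $\tau$. A secondary technical nuisance is that the skeleton chain $S_{i\e\Tt}$ genuinely depends on $n$, through both its time increment $\e\Tt$ and the lattice spacing $\Ttt^{-1}$, so a fixed-chain Donsker--Varadhan theorem cannot be quoted directly; routing everything through the $n$-independent Brownian skeleton chain via the coupling is what bypasses this. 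An alternative that avoids couplings altogether would be to prove a triangular-array Donsker--Varadhan statement, using the uniform local central limit theorem on $\Dt$ (Lemma \ref{BscBrwnApprxLemma}) to show that $\tau^{-1}\log\En_x\big[\exp(\tau\langle f,L_{n,\e}\rangle)\big]$ converges, for every bounded continuous $f$ on $\L_N\times\L_N$, to the corresponding Brownian quantity, and then appealing to Bryc's inverse Varadhan lemma together with the convexity and lower semicontinuity of $\mu\mapsto h(\mu|\mu_1\otimes\p^{(N)}_\e)$; but the coupling route is shorter and more transparent.
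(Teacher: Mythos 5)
Your proposal is correct and follows essentially the same route as the paper: an LDP for the pair empirical measure of the time-$\e$ Brownian skeleton on $\L_N$ via Donsker--Varadhan/Bolthausen theory, combined with a strong-approximation (Einmahl/KMT-type) coupling whose failure probability is super-polynomially small by the moment assumption \eqref{MomentAss}, yielding exponential equivalence at speed $\tau=\log n$. The only cosmetic difference is that you extract polynomial moments of all orders and quote the coupling bound in the form $C_{A,\beta}n^{-A}$, whereas the paper applies Einmahl's Theorem 4 directly with the bound $n/H(c\delta\sqrt{n})$; both give the same conclusion.
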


\begin{proof}
The proof consists of two steps. Firstly, we remind the reader of exponential equivalence for empirical measures in the context of large deviation theory, and then prove that $L_{n,\e}$ is exponentially equivalent to a different empirical functional $\widehat{L_{n,\e}}$ on the right scale. In the second step, we show that $\widehat{L_{n,\e}}$ satisfies a large deviation principle with good rate function from which the proposition then follows.\\
Recall that the Prokhorov distance $\d_P$ between two probability measures $\mu,\nu$ on some metric space $(E,\d)$ (with $\sigma-$algebra $\Ecal$) is defined as
\begin{equation*}
    \d_P(\mu,\nu)=\inf\{\e>0\,\, \forall B\in \Ecal\colon \, \mu(B)\le \nu(B^\e)+\e\text{  and  }\nu(B)\le \mu(B^\e)+\e\}\, ,
\end{equation*}
where $B^\e=\{x\in E\colon \d(x,B)<\e\}$. Furthermore, let $(M_t)_{t\ge 0}$ be a standard Brownian motion on $\L_N$. We define the empirical functional $\widehat{L_{n,\e}}$ in the following way
\begin{equation*}
   \widehat{L_{n,\e}}=\frac{\e}{\tau}\sum_{i=1}^{\tau/\e}\delta_{(M_{(i-1)\e},M_{i\e})}\, . 
\end{equation*}
We now want to show that $\widehat{L_{n,\e}}$ and $L_{n,\e}$ are exponentially equivalent, i.e.\ there exists a common probability space $(\Omega,\Acal)$ and a probability measure $\Pr$ on $\Omega$ such that the random variable $(L_{n,\e},\widehat{L_{n,\e}})$ has the right marginal law and that for every $\delta>0$, we have that
\begin{equation*}
    \lim_{n\to\infty}\frac{1}{\tau}\log \Pr(\Gamma_\delta)=-\infty\, ,
\end{equation*}
with $\Gamma_\delta=\{(\mu,\nu)\in \Mcal_1(\L_N\times\L_N)^2\colon\, \d_P(\mu,\nu)>\delta\}$. By \cite[Theorem 4]{einmahl1989extensions} there exists a coupling $\P'$ between the $\P_x^{(N)}$ (the Brownian motion) and  $\Pn_x$ (the random walk on $\Dt$) satisfying
\begin{equation*}
    \P'(A):=\P'\left(\sup_{i\le \tau/\e}\abs{M_{i\e}-S_{i\e \Tt}}\ge \delta\right)\le \frac{n}{H\left(c\delta \sqrt{n}\right)}\, ,
\end{equation*}
for some fixed constant $c>0$ (depending on the distribution of $X_1$). Choosing $\Pr=\P'$ now implies the exponential equivalence, as
\begin{equation*}
    \Pr(\Gamma_\delta)\le \Pr(A)+\Pr(\Gamma_\delta\cap A^c)\, .
\end{equation*}
Indeed, we have
\begin{equation*}
    \Pr(\Gamma_\delta\cap A^c)=0\, .
\end{equation*}
Thus
\begin{equation}
    \lim_{n\to \infty}\frac{1}{\tau}\log \Pr(\Gamma_\delta)\le\lim_{n\to \infty}\frac{1}{\tau}\log \Pr(A)=-\infty\, ,
\end{equation}
by the assumption on $H$. This shows that $L_{n,\e}$ is exponentially equivalent to $\widehat{L_{n,\e}}$.\\
It remains to show that $\widehat{L_{n,\e}}$ satisfies an LDP with good rate function $\e^{-1}I_{\e}^{(2)}$. By \cite[Theorem 1.5]{BOLTHAUSEN87}, this is indeed the case. Implied by \cite[Lemma 6.2.12]{dembo2009large}, the rate function is good in the weak topology, so by \cite[Theorem 4.2.13]{dembo2009large}  the result now follows by exponential approximation.
\end{proof}

\subsection{An LDP for the skeleton walk}\label{LDPskeletonWalk}
In this section, we prove an LDP for the skeleton walk (defined below). This will make use of subsection \ref{DskrVrdhSubsctn} as well as some random walk estimates.\\
Define the skeleton walk
\begin{align*}
\Scal_{n,\eps}=\{S_{i\eps \Tt}\}_{1\le i\le \tau/\eps},
\end{align*}
where $(S_t)_t$ is distributed under $\Pn_0$ (and thus lives on on $\Dt$). Furthermore, define $\Pne,\Ene$ the conditional law/expectation given $\Scal_{n,\e}$. The aim will be to write the range of the skeleton walk as a functional of the empirical measure introduced in subsection \ref{DskrVrdhSubsctn} and then use the contraction principle. Several approximations steps will be necessary.\\
For $\eta>0$, let $\Phi_\eta\colon \Mcal_1(\L_N\times \L_N)\to [0,\infty)$ be defined as
\begin{equation*}
    \Phi_\eta(\mu)=\int_{\L_N}\d x\left( 1-\exp\left[ -\eta  \int_{\L_N\times \L_N} 2\pi\phi_\e(y-x,z-x)\mu(\d y,\d z)\right]\right)\, ,
\end{equation*}
with
\begin{equation}\label{defPhie}
    \phi_\e(y,z)=\frac{\int_0^\e \d s\, \p_{s}^{(N)}(-y)\p_{\e-s}^{(N)}(z)}{\p_{\e}^{(N)}(z-y)}\, .
\end{equation}
The key result of this section is the following LDP for the measure conditioned on the skeleton walk:
\begin{proposition}\label{sceletonLDP}
$\Ene\left[\frac{1}{\Tt}\Rcal_n\right]$ satisfies an LDP on $\R^+$ with speed $\tau$ and rate function
\begin{equation*}
    J_{\e}(b)=\inf\big\{\e^{-1}I_{\e}^{(2)}(\mu)\colon \mu\in\Mcal_1(\L_N\times \L_N),\Phi_{1/\e}(\mu)=b\big\}\, .
\end{equation*}
\end{proposition}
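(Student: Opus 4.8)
The plan is to realize $\En_{n,\e}[\tfrac{1}{\Tt}\Rcal_n]$ as a continuous functional of the empirical measure $L_{n,\e}$ from Proposition \ref{DV-Prop}, and then invoke the contraction principle. First I would rewrite the conditional expectation of the range of the skeleton walk. Given the skeleton points $\Scal_{n,\e}=\{S_{i\e\Tt}\}_{1\le i\le\tau/\e}$, the bridges between consecutive skeleton points are conditionally independent, so for a fixed site $x\in\Dt$ the probability that $x$ is \emph{avoided} by the whole walk factorizes over the $\tau/\e$ bridges. Using the Markov property and the transition kernels $\pn_t$, the conditional avoidance probability of $x$ in the $i$-th bridge, given endpoints $S_{(i-1)\e\Tt}=u$ and $S_{i\e\Tt}=v$, is expressible in terms of $\pn$ on $\Dt$; after rescaling and using the local CLT approximation of Lemma \ref{BscBrwnApprxLemma} to pass from $\pn$ to the Brownian torus kernel $\p^{(N)}$, the relevant quantity becomes (up to lower-order corrections) $\exp[-\,2\pi\,\Tt^{-1}\phi_\e(u-x,v-x)]$ with $\phi_\e$ as in \eqref{defPhie} — this is exactly the point where the $2\pi$ and the function $\phi_\e$ enter. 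Summing $1$ minus the product over the sites $x$ and over the bridges, and replacing the sum over $x\in\Dt$ by an integral $\int_{\L_N}\d x$ (valid up to $o(\Tt)$), yields
\begin{equation*}
    \frac{1}{\Tt}\,\En_{n,\e}\!\left[\Rcal_n\right]=\Phi_{1/\e}(L_{n,\e})+o(1)
\end{equation*}
uniformly, i.e.\ $\tfrac{1}{\Tt}\En_{n,\e}[\Rcal_n]$ is exponentially equivalent to $\Phi_{1/\e}(L_{n,\e})$.

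Next I would check that $\mu\mapsto\Phi_{1/\e}(\mu)$ is continuous on $\Mcal_1(\L_N\times\L_N)$ in the weak topology. The kernel $\phi_\e(y-x,z-x)$ is bounded and continuous on $\L_N\times\L_N\times\L_N$: the denominator $\p_\e^{(N)}(z-y)$ is bounded below on the compact torus, and $\int_0^\e\d s\,\p_s^{(N)}(-y)\p_{\e-s}^{(N)}(z)$ is the density of a Brownian bridge passing near the origin, hence smooth and bounded for fixed $\e>0$. Therefore $x\mapsto\int 2\pi\phi_\e(y-x,z-x)\,\mu(\d y,\d z)$ is bounded uniformly in $\mu$, and weak convergence $\mu_k\to\mu$ gives pointwise (in $x$) convergence of this integral, so by dominated convergence $\Phi_{1/\e}(\mu_k)\to\Phi_{1/\e}(\mu)$. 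With continuity in hand, the contraction principle applied to the LDP of Proposition \ref{DV-Prop} (speed $\tau$, good rate function $\e^{-1}I_\e^{(2)}$) immediately gives that $\Phi_{1/\e}(L_{n,\e})$ satisfies an LDP with speed $\tau$ and rate function
\begin{equation*}
    J_\e(b)=\inf\big\{\e^{-1}I_\e^{(2)}(\mu)\colon \mu\in\Mcal_1(\L_N\times\L_N),\ \Phi_{1/\e}(\mu)=b\big\},
\end{equation*}
and then exponential equivalence transfers the LDP to $\tfrac{1}{\Tt}\En_{n,\e}[\Rcal_n]$ via \cite[Theorem 4.2.13]{dembo2009large}. (One should also note $\Phi_{1/\e}$ is bounded by $|\L_N|=N^2$, so the induced rate function is automatically good, being the contraction of a good one.)

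The main obstacle is the first step: making the passage from the discrete conditional avoidance probabilities to the clean exponential form with $\phi_\e$ rigorous and showing the error is $o(\Tt)$ \emph{exponentially}, i.e.\ that $\tfrac{1}{\Tt}\En_{n,\e}[\Rcal_n]$ and $\Phi_{1/\e}(L_{n,\e})$ are exponentially equivalent at speed $\tau$ and not merely close in expectation. This requires (i) controlling the hitting probability of a single rescaled site by a random-walk bridge on the torus, which is where Lemma \ref{hardscalinglemma} and the potential-kernel computation of \cite{uchiyama2011} are used (the logarithmic behavior of the two-dimensional potential kernel produces the factor $2\pi/\Tt$, since $\Tt=n/\log n$); (ii) handling the product over the $\tau/\e$ bridges and the possible correlations coming from a site being hit in several bridges, which is controlled because each bridge contributes a factor $1+O(\Tt^{-1}\log\Tt)$ and there are only $\tau/\e$ of them; and (iii) replacing $\sum_{x\in\Dt}$ by $\Tt\int_{\L_N}\d x$, which costs only a Riemann-sum error since $\Dt$ has mesh $\Tt^{-1/2}$. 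I expect these estimates to mirror the corresponding steps in \cite{BBH}, with the extra discreteness corrections absorbed into the $o(1)$ using the moment assumption \eqref{MomentAss} only where the skeleton coupling from Proposition \ref{DV-Prop} is re-used.
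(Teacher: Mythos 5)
Your overall route---factorizing over the bridges given the skeleton, using Uchiyama-type single-point hitting asymptotics to produce $2\pi\phi_\e$, rewriting the conditional expectation as a functional of $L_{n,\e}$, and concluding by the contraction principle together with an exponential-equivalence/uniform-approximation step---is indeed the paper's strategy. But two points in your write-up are genuine gaps rather than omitted routine details. First, the scaling of the single-site hitting probability is wrong: the correct statement is $\tau\, b_{n,\e}(y,z)\to 2\pi\phi_\e(y,z)$, i.e.\ each bridge hits a fixed rescaled site with probability of order $1/\tau=1/\log n$, not $2\pi/\Tt$, and a bridge contributes a factor $1+\Ocal(1/\tau)$, not $1+\Ocal(\Tt^{-1}\log\Tt)$. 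With the orders you quote, the product over the $\tau/\e$ bridges would tend to $1$ and the limiting functional would be trivial; so the error bookkeeping in your step (ii) is not valid as stated, even though the identity $\Tt^{-1}\Ene[\Rcal_n]=\Phi_{1/\e}(L_{n,\e})+o(1)$ you aim for is the right target.

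Second, and more seriously, the claim that $\phi_\e$ is bounded and continuous on $\L_N\times\L_N$ is false in $d=2$: $\int_0^\e \p_s^{(N)}(-y)\,\d s$ diverges logarithmically as $y\to 0$, so $\phi_\e(y,z)$ blows up when $y$ or $z$ approaches $0$, i.e.\ when the site $x$ is close to a skeleton point; correspondingly $b_{n,\e}(y-x,z-x)$ is of order one there and the expansion $\log(1-b)\approx -\tfrac{2\pi}{\tau}\phi_\e$ breaks down. Consequently neither your ``uniform'' approximation of $\Tt^{-1}\Ene[\Rcal_n]$ by $\Phi_{1/\e}(L_{n,\e})$ nor your dominated-convergence argument for weak continuity of $\Phi_{1/\e}$ goes through as written. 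The missing idea is a truncation near the skeleton points: the paper first cuts holes of radius $Q_\tau\to 0$ around the skeleton points (removing only $\Ocal(\tau Q_\tau^2\Tt/\e)=o(\Tt)$ sites, Step 1 and \eqref{holecutbound}), proves the asymptotics $\tau b_{n,\e}\to 2\pi\phi_\e$ only uniformly outside balls $B_\rho$ (Proposition \ref{PropHard}), and then removes the truncation via the chain of estimates in Lemma \ref{ContinuityProp}; weak continuity of the limiting functional is obtained because $\Phi_{\infty,1/\e,0}$ is a uniform-in-$\mu$ limit of the weakly continuous truncated functionals $\Phi_{\infty,1/\e,\rho}$, not from a bounded-kernel argument. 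You would need to add this hole-cutting/truncation layer (and correct the $1/\tau$ scaling) for your proposal to close.
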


\begin{proof}
\textbf{Step 1:}
Unless stated otherwise, in this proof $(S_t)_t$ is distributed with respect to $\Pn_0$ and therefore takes values in $\Dt$. Define for $1\le i\le \tau/\e$ the set 
$$
\Wcal_i=\left\lbrace S_j\colon (i-1)\e\Tt \leq j \leq i\e\Tt \right\rbrace\, .
$$
We begin by cutting holes into our range, to lessen dependence. Let us denote by
$$
Q_\tau:=\sqrt{\log \Tt\log\log \Tt}^{-1}\, ,
$$ 
and define
\begin{equation*}
\begin{split}
\Wcal_i^\tau=\Wcal_i\setminus\!\big\lbrace & S_j\colon \abs{S_j-S_{(i-1)\e \Tt}}<Q_\tau \\
&
\text{ or } \abs{S_j-S_{(i-1)\e \Tt}}<Q_\tau;\, (i-1)\e\Tt\!<j<\!i\e \Tt\big\rbrace.
\end{split}
\end{equation*}
Meanwhile, set
$$
\Rcal_n^\tau=\#\Big\{\bigcup_{i=1}^{\tau/\e}\Wcal_i^\tau\Big\}\,.
$$
Notice that the cutting holes procedure corresponds to removing balls of size $Q_\tau$ on $\L_N$. Then it follows for the random walk on $\Dt$ with some constant $c_1>0$
\begin{equation}\label{holecutbound}
\frac{1}{\Tt}\abs{\Rcal_n-\Rcal_n^\tau}\leq 
\frac{1}{\Tt}c_1\frac{\tau}{\eps} \left(2 Q_\tau\sqrt{\Tt}\right)^2
\to 0\, ,
\end{equation}
 as $n\to\infty$, since we cut at most $\tau/\epsilon$ many balls, each of which has radius $Q_\tau$ (and thus contains approximately $\left( Q_\tau\sqrt{\Tt}\right)^2$ points). \\
\textbf{Step 2:} Let us define the stopping time $\sigma=\min\{n\ge 0\colon S_n=0\}$. Then, for $y,z\in \L_N$, the bridge kernel is denoted by
\begin{equation}\label{bridgeP}
\begin{split}
b_{n,\e}(y,z)
:=& \Pn_{y}\left(\sigma\le \e\Tt\mid S_{\e \Tt}=z\right)\\
=& \Pn_{y^-}\left(\sigma\le \e\Tt\mid S_{\e \Tt}=z^-\right)\\
=&\Pn_{y,z}\left(\sigma\le \e\Tt\right)\, .
\end{split}
\end{equation}
We denoted the bridge probability measure by $\Pn_{y,z}\left(\,\cdot\,\right):= \Pn_{y^-}\left(\,\cdot\,\mid S_{\e \Tt}=z^-\right)$. We now have
\begin{align}\label{eqEmpMrsRplc}
\Ene &\left[\frac{1}{\Tt}\Rcal^\tau_n\right]
=\frac{1}{\Tt}\sum_{x\in\Dt} \left(1-\Pne\Big(x\notin\bigcup_{i=1}^{\tau/\e}\Wcal_i^\tau\Big)\right) \nonumber\\
&=\int_{\L_N}\d x \left(1-\Pne\Big(x^-\notin\bigcup_{i=1}^{\tau/\e}\Wcal_i^\tau\Big)\right)\\
&=\int_{\L_N}\d x \left(1-\exp\Big(\sum_{i=1}^{\tau/\e}\log\left[1-\Pn_{n,\e}\left(x^-\notin\Wcal_i^\tau\right)\right]\Big)\right) \nonumber\\
&=\int_{\L_N}\d x \left(1-\exp\left(\frac{\tau}{\e}\int_{\L_N\times \L_N}\!\!\!\!\!\!\!\!\!\! L_{n,\e}(\d y, \d z)\log\left[1-b_{n,\e}^{Q_\tau}(y-x,z-x)\right]\right)\right)\, ,\nonumber
\end{align}
where in the last equality we inserted the definition of empirical measure given in \eqref{empP} and $b_{n,\e}^{Q_\tau}(y,z)=b_{n,\e}(y,z)\1\{y,z\notin B_{Q^\tau}\}$. We remind the reader that in general $B_{\rho}$ denotes the centered ball of radius $\rho$ in $\R^2$.

\hspace{-4mm}\textbf{Step 3:} We begin with an important proposition.
\begin{proposition}\label{PropHard}
For $y, z\in\L_N$, we have
\begin{enumerate}[(a)]
\item $\lim\limits_{\tau\to\infty}\sup\limits_{y,z\notin B_{Q_\tau}}b_{n,\eps}(y,z)=0.$
\item $\lim\limits_{\tau\to\infty}\sup\limits_{y,z\notin B_{\rho}}\babs{ \tau b_{n,\eps}(y,z)-2\pi \phi_\eps(y,z)}=0$, for all $0< \rho<  N/4$.
\end{enumerate}
\end{proposition}
We defer the proof of this proposition to the end of the section, as it is lengthy and might distract the reader from the overall structure of the proof.

Let us now introduce two new functions, 
$\Phi_{n,\eta,\rho}\colon \Mcal_1(\L_N\times\L_N)\to [0,\infty)$
\begin{equation*}
    \Phi_{n,\eta,\rho}(\mu)=\int\d x\left(1-\exp\left(-\eta \tau\int_{\L_N\times\L_N}b_{n,\e}^{\rho}(y-x,z-x)\d \mu(x,y)\right)\right)\, ,
\end{equation*}
and $\Phi_{\infty,\eta,\rho}\colon \Mcal_1(\L_N\times\L_N)\to [0,\infty)$
\begin{equation*}
    \Phi_{\infty,\eta,\rho}(\mu)=\int\d x\left(1-\exp\left(- \eta\int_{\L_N\times\L_N} 2\pi\phi_{\e}^{\rho}(y-x,z-x)\d \mu(x,y)\right)\right)\,  ,
\end{equation*}
with $\phi_\e^\rho(y,z)=\phi_\e(y,z)\1\{y,z\notin B_{\rho}\}$.
For a sequence $\delta_\tau>0$ with $\lim_{\tau\to\infty}\delta_\tau=0$, we have by the Proposition \ref{PropHard} and \eqref{eqEmpMrsRplc}
\begin{equation}\label{ERntauBound}
    \Phi_{n,1/\e,Q_\tau}(L_{n,\e})\le \Ene \left[\Tt^{-1}\Rcal_N^\tau \right]\le \Phi_{n,(1+\delta_\tau)/\e,Q_\tau}(L_{n,\e})\, ,
\end{equation}
The next lemma summarizes continuity properties of the above defined functions.

\begin{lemma}\label{ContinuityProp}
Fix $N>0$. There exist absolute constants $C_i$ such that
\begin{enumerate}[(a)]
\item $\babs{\Phi_{n,\eta,\rho}(\mu)-\Phi_{n,\eta,\rho'}(\mu)}\le C_1\eta\left(\rho^2+\rho'^2\right)$ for all $\eta,\mu$ and $n\geq n_0(\rho, \rho')$.
\item $\babs{\Phi_{n,\eta,\rho}(\mu)-\Phi_{n,\eta',\rho}(\mu)}\le C_2|\eta-\eta'|$ for all $\rho,\mu$ and $n\geq n_0(\rho)$.
\item $\babs{\Phi_{n,\eta,\rho}(\mu)-\Phi_{\infty,\eta,\rho}(\mu)}\le C_3 \eta o_\rho(1)$ for all $\eta,\mu$.
\item $\babs{\Phi_{\infty,\eta,\rho}(\mu)-\Phi_{\infty,\eta,0}(\mu)}\le C_4\eta \rho^2$ for all $\eta,\mu$.
\item $\babs{\Phi_{\infty,\eta,0}(\mu)-\Phi_{\infty,\eta,0}(\mu')}\le C_5\eta\norm{\mu-\mu'}_{tv}$ for all $\eta$ and $\norm{\cdot}_{tv}$ denotes the total variation norm.
\end{enumerate}
\end{lemma}
\begin{proof}
Using the definition of $\phi_\e(\cdot,\cdot)$ in Equation \eqref{defPhie}, we notice that
\begin{align*}
&  \int_{\L_N}\d x \int_{(\L_N\times B_\rho(x))\cup (B_\rho(x)\times \L_N)}\mu(\d y,\d z) \phi_\e(y-x,z-x)\\
&\leq \int_{\L_N\times \L_N}\mu(\d y,\d z) \int_{B_\rho(y)}\d x \int_{0}^{\e}\d s\frac{\p^{(N)}_{\e-s}(y,x)\p^{(N)}_{s}(x,z)}{\p^{(N)}_{\e}(y,z)}\\
&+ \int_{\L_N \times \L_N}\mu(\d y,\d z) \int_{B_\rho(z)}\d x 
\int_{0}^{\e}\d s\frac{\p^{(N)}_{\e-s}(y,x)\p^{(N)}_{s}(x,z)}{\p^{(N)}_{\e}(y,z)}\,,
\end{align*}
where the last inequality followed by exchanging the order of integration. We can estimate both parts in the same way, so it suffices to bound
\begin{equation*}
    \int_{\L_N\times\L_N} \mu(\d y,\d z)\int_{B_\rho(y)}\d x \int_{0}^{\e}\d s\frac{\p^{(N)}_{\e-s}(y,x)\p^{(N)}_{s}(x,z)}{\p^{(N)}_{\e}(y,z)}\, .
\end{equation*}
As $N$ is fixed, $\p^{(N)}_{\e}(y,z)$ and $\p^{(N)}_{\e-s}(y,x)$ can be bounded from above and below by non-zero constants, in particular uniformly in $z,y,x$. Thus, it remains to verify
\begin{equation*}
    \int_0^{\e}\d s\int_{B_\rho(0)}\d x\, \p^{(N)}_{s}(x)=\Ocal(\rho^2)\, ,
\end{equation*}
which follows easily after a change of variables. 
Therefore, we get
\begin{equation}\label{EstPhie}
\int_{\L_N}\d x \int_{\L_N\times \L_N}\mu(\d y,\d z) \big|\phi_\e(y-x,z-x)-\phi_\e^\rho(y-x,z-x)\big|
=\Ocal(\rho^2)\,.
\end{equation}
\begin{enumerate}[(a)]
\item Keeping the above bound in mind, we have
\begin{align*}
&\babs{\Phi_{n,\eta,\rho}(\mu)-\Phi_{n,\eta,\rho'}(\mu)}\\
 &\leq\eta\int_{\L_N}\d x\int_{\L_N\times \L_N}\mu(\d y,\d z)\left| \tau b_{n,\e}^{\rho}(y-x,z-x)
-\tau b_{n,\e}^{\rho'}(y-x,z-x)\right|\\
 &=\eta\int_{\L_N}\d x\int_{\L_N\times \L_N}\mu(\d y,\d z)\\
&\qquad\qquad\qquad\times \left( \left| 2\pi\phi_{\e}^{\rho}(y-x,z-x)
-2\pi\phi_{\e}^{\rho'}(y-x,z-x)\right| +o_{\rho,\rho'}(1)\right)\\
 & \leq C\eta \left( (\rho^2+\rho'^2)+|\L_N|o_{\rho,\rho'}(1)\right)\,,
\end{align*}
where we used the part $(b)$ from Proposition \ref{PropHard}.

\item Similarly, it holds
\begin{align*}
&\babs{\Phi_{n,\eta,\rho}(\mu)-\Phi_{n,\eta',\rho}(\mu)} \\
&\leq |\eta-\eta'| \int_{\L_N}\d x\int_{\L_N\times \L_N}\mu(\d y,\d z)\left| \tau b_{n,\e}^{\rho}(y-x,z-x)\right|\\
&= |\eta-\eta'| \int_{\L_N}\d x\int_{\L_N\times \L_N}\mu(\d y,\d z)\big( \left| 2\pi \phi_{\e}^{\rho}(y-x,z-x)\right|+o_{\rho}(1) \big)\\
&\leq C\left( |\eta-\eta'|+|\L_N|o_{\rho}(1)\right)\,.
\end{align*}

\item We obtain
\begin{align*}
&\babs{\Phi_{n,\eta,\rho}(\mu)-\Phi_{\infty,\eta,\rho}(\mu)}\\
&\leq \eta\int_{\L_N}\d x\int_{\L_N\times \L_N}\mu(\d y,\d z)\left| \tau b_{n,\e}^{\rho}(y-x,z-x)
-2\pi\phi_{\e}^{\rho}(y-x,z-x)\right|\\
&\le C \eta|\L_N| o_\rho(1)\,.
\end{align*}

\item Employing \eqref{EstPhie} directly, we obtain
\begin{align*}
&\babs{\Phi_{\infty,\eta,\rho}(\mu)-\Phi_{\infty,\eta,0}(\mu)}\\
&\leq \eta \int_{\L_N}\d x\int_{\L_N\times \L_N}\mu(\d y,\d z) \big|2\pi \phi_\e (y-x,z-x)-2\pi \phi_\e^\rho (y-x,z-x) \big|\\
&\le C_4\eta \rho^2\,.
\end{align*}

\item In the same way, we get
\begin{align*}
&\babs{\Phi_{\infty,\eta,0}(\mu)-\Phi_{\infty,\eta,0}(\mu')}\\
 &\leq\eta \int_{\L_N}\d x\int_{\L_N\times \L_N}|\mu-\mu'| (\d y,\d z) 2\pi \phi_\e^\rho (y-x,z-x) \\
&\le C_5\eta\norm{\mu-\mu'}_{tv}\,.
\end{align*}
\end{enumerate}
This concludes the proof of Lemma \ref{ContinuityProp}.\qed
\end{proof}

\hspace{-4mm}\textbf{Step 4:} 
We approximate $\Ene[\Tt^{-1}\Rcal_n]$ by $\Ene[\Tt^{-1}\Rcal_n^\tau]$, with the error given in \eqref{holecutbound}. Then we combine \eqref{ERntauBound}, Proposition \ref{PropHard} with the previous lemma to get 
\begin{align*}
&\Ene\left[ \Tt^{-1}\Rcal_n \right]\\
\leq& \Ene\left[ \Tt^{-1}\Rcal_n^\tau \right]+ c_1 \frac{\tau Q_\tau^2}{\e} \\
\leq& \Phi_{n,(1+\delta_\tau)/\e,Q_\tau}\left(L_{n,\e}\right) + c_1  \frac{\tau Q_\tau^2}{\e} \\
\leq& \Phi_{n,1/\e,\rho}\left(L_{n,\e}\right) + c_1  \frac{\tau Q_\tau^2}{\e} + C_1(Q_\tau^2+\rho^2) + C_2\frac{\delta_\tau}{\e} \\
\leq& \Phi_{\infty,1/\e,0}\left(L_{n,\e}\right) + c_1  \frac{\tau Q_\tau^2}{\e} + C_1(Q_\tau^2+\rho^2) + C_2\frac{\delta_\tau}{\e} + C_3\frac{o_\rho(1)}{\e} + C_4 \frac{\rho^2}{\e}\,.
\end{align*}
Therefore, we arrive at
\begin{align*}
&\left\| \Ene\left[ \Tt^{-1}\Rcal_n \right] -\Phi_{\infty,1/\e,0}\left( L_{n,\e} \right)\right\| _\infty  \\
&\leq c_1 \frac{\tau Q_\tau^2}{\e} + C_1(Q_\tau^2+\rho^2) + C_2\frac{\delta_\tau}{\e} + C_3\frac{o_\rho(1)}{\e} + C_4\frac{\rho^2}{\e}\, .
\end{align*}
Note $\lim_{\tau\to\infty}\delta_\tau=0$. By letting $\tau\to\infty$, $\rho$ to zero, this implies for all $\e >0$
\begin{equation*}
    \lim_{\tau\to \infty}\norm{\Ene[\Tt^{-1}\Rcal_n]-\Phi_{\infty,1/\e,0}(L_{n,\e})}_\infty=0\, .
\end{equation*}
Using the contraction principle and Proposition \ref{DV-Prop}, we conclude the proof of Proposition \ref{sceletonLDP}.

\end{proof}

\subsubsection{Proof of Proposition \ref{PropHard}}
\begin{proof}
\begin{enumerate}[(a)]
\item Note by \eqref{bridgeP} that $b_{n,\e}(a,b)=\Pn_{a,b}\left(\sigma\le \e\Tt\right)$, where $\sigma=\min\{n\colon S_n=0\}$.
We first remove the bridge and then the torus restriction. Since
\begin{equation}
    \sup_{0\le t\le \e\Tt/2}\pn_{\e\Tt-t}(0)/\pn_{\e\Tt}(0)<\infty\, ,
\end{equation}
there exists a constant $c_2>0$ such that for $a,b\in\L_N$ and $0< Q_\tau<N/4$,
\begin{equation*}
    \sup_{a,b\notin B_{Q_\tau}}b_{n,\e}(a,b)\le 2\, c_2\sup_{a\notin B_{Q_\tau}}\Pn_a\left(\sigma\le \frac{\e\Tt}{2}\right)\, .
\end{equation*}
Let $\sigma_r$ be the hitting time of the boundary of the centered ball with radius $r>0$ in $\R^2$, i.e.\ $\sigma_r=\min\{n\ge 0\colon S_n\in B_{N/r}\}$. We decompose the right-hand side of above equation into
\begin{align}\label{Pstoppingdecomp}
  &\Pn_a\left(\sigma\le \frac{\e\Tt}{2}\right)\nonumber\\
  =& \Pn_a\left(\sigma\le \frac{\e\Tt}{2},\, \sigma<\sigma_{N/2}\right)+\Pn_a\left(\sigma\le \frac{\e\Tt}{2},\, \sigma\geq \sigma_{N/2}\right) \, ,
\end{align}
and it holds that
\begin{equation*}
  \Pn_a\left(\sigma\le \frac{\e\Tt}{2},\, \sigma \geq \sigma_{N/2}\right)\le c_3\sup_{x\in \partial B_{Q_\tau}}\Pn_{x}\left(\sigma\le \frac{\e \Tt}{2}\right)\, ,
\end{equation*}
where we define
\begin{equation*}
  c_3:=  \left(\sup_n\sup_{x\in \partial B_{N/2}}\Pn_{x}\left(\sigma_{N/4}<\frac{\e \Tt}{2}\right)\right)\,.
\end{equation*}
Indeed, the random walk must hit $\partial B_{N/2}$ before it hits the origin, therefore it must pass through $\partial B_{N/4}$ on its way. In particular, by Donsker's invariance principle, we have
\begin{equation*}
\lim_{n\to\infty}\sup_{x\in \partial B_{N/2}}\Pn_{x}\left(\sigma_{N/4}<\frac{\e \Tt}{2}\right)
= \sup_{x\in \partial B_{N/2}}\P^{(N)}_{x}\left(\sigma_{N/4}<\frac{\e }{2}\right)<1\, ,
\end{equation*}
where we were allowed to exchange the limit and supremum as $x\mapsto\P^{(N)}_{x}\left(\sigma_{N/4}<\frac{\e }{2}\right)$ is constant in $|x|$.
And thus, $c_3<1$. Therefore, \eqref{Pstoppingdecomp} has been reduced to
\begin{equation*}
    \sup_{a\notin B_{Q_\tau}}\Pn_{a}\left(\sigma\le \frac{\e\Tt}{2}\right)\le \frac{1}{1-c_3}\sup_{a\notin B_{Q_\tau}}\Pn_{a}\left(\sigma\le \frac{\e\Tt}{2},\, \sigma<\sigma_{N/2}\right)\, .
\end{equation*}
On the event $\sigma<\sigma_{N/2}$ (i.e. the random walk has not hit $\partial B_{N/2}$ yet), the random walk on $\Dt$ behaves as a random walk on $\Z^2$, and thus we can bound
\begin{equation*}
\Pn_{a}\left(\sigma\le \frac{\e\Tt}{2},\, \sigma<\sigma_{N/2}\right)\le P_{a^+}\left(\sigma\le \frac{\e\Tt}{2}\right)\, .
\end{equation*}
Note that we needed to undo the scaling of the factor $\Tt^{1/2}$ when we changed from $\Dt$ to $\Z^2$. Putting the previous steps together
\begin{equation*}
    \sup_{a,b\notin B_{Q_\tau}}b_{n,\e}(a,b)
\le \frac{c_2}{1-c_3}\sup_{a\notin B_{Q_\tau}} P_{a^+}\left(\sigma\le \frac{\e\Tt}{2}\right)\, .
\end{equation*}
We decompose now
\begin{equation*}
    P_{a^+}\left(\sigma\le \frac{\e\Tt}{2}\right)\le P_{a^+}\left(\sigma\le \omega\right)+P_{a^+}\left(\omega\le \frac{\e\Tt}{2}\right)\, ,
\end{equation*}
where $\omega$ is the first entrance time into the complement of the centered ball with radius $R\Ttt$. Applying the estimate given in \cite[Proposition 6.4.3]{Lawler2010},  we have that 
\begin{align*}
 & P_{a^+}\left(\sigma\le \omega\right)\\
=& \left(1-\frac{\log (|a|\sqrt{\Tt})+\Ocal \left(|a\sqrt{\Tt}|^{-1}\right)}{\log \left(R\sqrt{\Tt}\right)}\right)\left(1+\Ocal\left(\frac{1}{\log \left(R\sqrt{\Tt}\right)}\right)\right)\, ,
\end{align*}
Furthermore, by bounds on the maximum in \cite[Equation 2.6]{Lawler2010} we have that for some $c>0$
\begin{equation*}
    P_{a^+}\left(\omega\le \frac{\e\Tt}{2}\right)\le 2 c\left(\frac{R-|a|}{\sqrt{\e/2}}\right)^{-4} \, .
\end{equation*}
Setting $R=|a|+Q_\tau^{-1/4}$ and using $\log(1+x)\le x$ allow us to bound for some $C>0$
\begin{equation*}
P_{a^+}\left(\sigma\le \frac{\e\Tt}{2}\right)\le 2\e^2 c Q_\tau +C\frac{\left(\log \Tt\log\log \Tt\right)^{3/8}}{1/2\log\Tt+\log Q_\tau}
\overset{\tau\to\infty}{\longrightarrow}0 \, .
\end{equation*}
\item Let $0<\delta<\e/2$ and for $a,b\in \L_N$, we define
\begin{equation}\label{Defbne3}
b_{n,\e}(a,b,\delta\Tt)
=\Pn_{a,b}\left( \delta \Tt<\sigma <(\e-\delta)\Tt \right)\, .
\end{equation}
We claim that
\begin{equation}\label{reducingProofb}
\lim_{\delta\downarrow 0}\lim_{n\to \infty}\sup_{a,b\notin B_{\rho}}\babs{ \tau b_{n,\e}(a,b)-\tau b_{n,\e}(a,b,\delta \Tt)}=0\, .
\end{equation}
Notice that
\begin{align*}
& \babs{ \tau b_{n,\e}(a,b)-\tau b_{n,\e}(a,b,\delta \Tt)}\\
&\leq \tau \left| \Pn_{a,b}\left(\sigma\le \e\Tt\right) - \Pn_{a,b}\left( \delta \Tt<\sigma <(\e-\delta)\Tt \right)\right|\\
&\leq \tau \left| \Pn_{a,b}\left(\sigma\le \delta
\Tt\right)\right|+\tau \left| \Pn_{b,a}\left(\sigma\le \delta
\Tt\right)\right|\,,
\end{align*}
and repeat the argument of part $(a)$ to obtain
\begin{align*}
\sup_{a,b\notin B_{\rho}} \Pn_{a,b}\left(\sigma\le \delta\Tt\right)
\leq c_4\sup_{y\notin B_{\rho}}P_{y^+}\left(\sigma\le \delta\Tt\right)
\end{align*}
with some constant $c_4>0$. Therefore, in order to prove \eqref{reducingProofb} it suffices to show for $y\in \partial B_{\rho}$
\begin{equation*}
    \lim_{\delta \downarrow 0}\lim_{\tau\to\infty}\tau P_{{y}^+}(\sigma<\delta \Tt)=0\,, 
\end{equation*}
However, as it was shown in \cite[Theorem 1.6]{uchiyama2011}
\begin{equation*}
    \lim_{\tau\to\infty}\tau P_{{y}^+}(\sigma<\delta \Tt)=\int_{\rho^2/\delta}^\infty\frac{\ex^{-u}}{u}\d u=2\pi \int_0^\delta \p^{(N)}_u(y)\d u.
\end{equation*}
Therefore, it suffices to show
\begin{equation}
    \lim_{\delta\downarrow 0}\lim_{\tau\to \infty}\babs{\tau b_{n,\e}(a,b,\delta \Tt)-2\pi \phi(a,b)}=0\, .
\end{equation}
We expand $b_{n,\e}(\cdot,\cdot,\cdot)$ 
\begin{equation*}
    b_{n,\e}(a,b,\delta \Tt)=\!\sum_{k=\delta \Tt}^{(\e-\delta)\Tt}\!\frac{\Pn_{a}(S_k=0,S_1,\ldots,S_{k-1}\neq 0)\Pn_0(S_{\e\Tt-k }=b)}{\Pn_{a}(S_{\e \Tt}=b)}\, .
\end{equation*}
We rewrite this using discrete integration by parts
\begin{align*}
&b_{n,\e}(a,b,\delta\Tt)\\
&= \frac{1}{\pn_{\e\Tt}\left(a-b\right)}\int_{\delta}^{\e-\delta}\d s \, \Pn_{a}(\sigma<{s\Tt})\frac{\left( \pn_{(\e-\delta-s)\Tt+1}(b)-\pn_{(\e-\delta-s)\Tt}(b)\right)}{\Tt^{-1}}\, .
\end{align*}
To handle the scaling limit, we introduce another lemma.
\begin{lemma}\label{hardscalinglemma}
We have that for $s>0$ and $a\in\L_N\setminus\{0\}$
\begin{equation}
    \lim_{n\to \infty}\tau \Pn_{a}(\sigma<{s\Tt})={2\pi}\int_0^{s}\p^{(N)}_t(a)\,\d t\, .
\end{equation}
\end{lemma}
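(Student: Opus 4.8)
The plan is to reduce the torus statement to a statement about the random walk on $\Z^2$, then invoke the potential-kernel computations of Uchiyama. First I would note that $\Pn_a$ is, by definition, the projection onto $\Dt$ of the $\Z^2$-walk started at $a^- $, and that the hitting time $\sigma$ of $0$ on the torus differs from the hitting time of $0$ (or of the lattice coset $N\Z^2$) on $\Z^2$ only on the event that the walk has traversed a full torus period. More precisely, writing $\omega$ for the first exit time of the ball $B_{N/2}\sqrt{\Tt}$ (in $\Z^2$-coordinates, so $B_{N/2}$ after rescaling), on the event $\{\sigma < s\Tt\} \cap \{\sigma < \omega\}$ the torus walk and the $\Z^2$-walk agree, and I would bound the discrepancy $\tau\,\Pn_a(\sigma < s\Tt,\ \sigma \ge \omega)$ using exactly the argument of Proposition \ref{PropHard}(a): the walk must cross $\partial B_{N/4}$ and then reach $0$, and a Donsker-type estimate together with part (a) shows this contributes $o(1/\tau)$. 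Hence
\begin{equation*}
\lim_{n\to\infty}\tau\,\Pn_a(\sigma < s\Tt) = \lim_{n\to\infty}\tau\, P_{a^+}(\sigma < s\Tt,\ \sigma < \omega),
\end{equation*}
provided the right-hand limit exists.

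For the $\Z^2$-walk, the key input is \cite[Theorem 1.6]{uchiyama2011}, already quoted in the excerpt in the form $\lim_{\tau\to\infty}\tau\,P_{y^+}(\sigma < \delta\Tt) = \int_{\rho^2/\delta}^\infty u^{-1}\ex^{-u}\,\d u = 2\pi\int_0^\delta \p^{(N)}_u(y)\,\d u$ for $y\in\partial B_\rho$; the same result gives, for general $a$ with $|a| = \rho$ and general $s>0$, $\lim_{\tau\to\infty}\tau\,P_{a^+}(\sigma < s\Tt) = \int_{\rho^2/s}^\infty u^{-1}\ex^{-u}\,\d u$. A direct change of variables $u = |a|^2/(2t)$ shows $\int_{|a|^2/s}^\infty u^{-1}\ex^{-u}\,\d u = 2\pi\int_0^s (2\pi t)^{-1}\ex^{-|a|^2/(2t)}\,\d t = 2\pi\int_0^s \p_t(a)\,\d t$, the free (non-torus) kernel. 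The final step is to replace $\p_t$ by $\p^{(N)}_t$: since $\p^{(N)}_t(a) = \sum_{z\in\Z^2}\p_t(a + Nz)$ and the $z\neq 0$ terms are bounded by $C\,t^{-1}\ex^{-c/t}$ near $t=0$ and are integrable against $\d t$ on $(0,s)$, they contribute a finite amount; but in fact the restriction to $\{\sigma < \omega\}$ already excises precisely the paths that would "see" the periodic images, so the limiting object is genuinely the torus kernel $\p^{(N)}$. I would make this precise by redoing the Uchiyama estimate with the additional confinement event $\{\sigma<\omega\}$, or alternatively by first proving the free-space version and then arguing, as in part (a), that adding the images changes the answer by $o(1)$ uniformly.

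The main obstacle is the last point: controlling the passage from the $\Z^2$ potential-kernel asymptotics to the torus kernel $\p^{(N)}$ uniformly, i.e.\ making sure the confinement event $\{\sigma<\omega\}$ and the periodic-image sum match up exactly in the limit rather than merely up to a constant. One clean way is to avoid the issue entirely: observe that $\Pn_a(\sigma < s\Tt)$ on the torus can be computed directly via the torus Green's function / potential kernel, and the torus potential kernel is $a_N(x) = a(x) + (\text{bounded periodic correction})$ where $a$ is the $\Z^2$ potential kernel; feeding this into the last-exit decomposition $\Pn_a(\sigma < s\Tt) = \sum_{k < s\Tt}\Pn_a(S_k = 0,\ \text{no earlier return})$ exactly as Uchiyama does, but with the torus kernel $\pn_k(0)$ in place of $p_k(0)$ and using \eqref{tranPDeltaN}, reproduces the same derivation with $\p^{(N)}$ replacing $\p$. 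Since \eqref{tranPDeltaN} gives $\pn_k(0) = \sum_z p_k(z^+N)$ and the local CLT controls each summand, the asymptotic identity $\tau\,\Pn_a(\sigma<s\Tt)\to 2\pi\int_0^s\p^{(N)}_t(a)\,\d t$ follows. I would present the short reduction to $\{\sigma<\omega\}$ first for conceptual clarity and then carry out this direct last-exit computation.
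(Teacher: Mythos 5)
There is a genuine gap, and it sits exactly at the point your own write-up flags as "the main obstacle". Your first reduction claims that the wrap-around term $\tau\,\Pn_a(\sigma<s\Tt,\ \sigma\ge\omega)$ is $o(1)$, citing the argument of Proposition \ref{PropHard}(a). That claim is false: for fixed $N$ and $s>0$ this term converges to $2\pi\int_0^s\sum_{z\neq 0}\p_t(a+Nz)\,\d t>0$, i.e.\ the paths that wrap around the torus contribute at \emph{exactly} the order $1/\tau$, and they are precisely what turns the free kernel $\p_t(a)$ into the torus kernel $\p^{(N)}_t(a)$ in the stated limit. The mechanism of part (a) (the constant $c_3<1$ obtained from Donsker's principle) only yields a constant-factor comparison between the confined and unconfined hitting probabilities; it can never produce an $o(1/\tau)$ bound, and indeed part (a) is only a crude "probability tends to zero" statement, not a statement at the precision $\tau\times$probability. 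Consequently your confinement step, if carried out, would prove the limit with the free kernel $\p_t$ on the right-hand side, which contradicts the lemma; the later sentence asserting that the restriction to $\{\sigma<\omega\}$ nevertheless yields "genuinely the torus kernel" is backwards. Your fallback suggestion, to redo Uchiyama's last-exit decomposition directly on the torus with $\pn_k(0)$ in place of $p_k(0)$, is only asserted, and it is not a routine transfer: $\pn_k(0)$ does not decay like $p_k(0)\sim(2\pi k)^{-1}$ once $k\gtrsim\Tt$ (it stabilizes at order $\Tt^{-1}$), so the potential-kernel asymptotics underlying Uchiyama's computation would have to be re-derived for the torus, which is essentially the content one is trying to prove.

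The paper closes this gap differently: it expands the torus hitting event over the lattice of periodic images, writing $\Pn_a(\sigma<s\Tt)$ by inclusion--exclusion as an alternating sum over $k$ of probabilities $P_0\bigl(H_{a^++Nz_i^+}<s\Tt,\ \forall i\le k\bigr)$ with $|z_i|\le\sqrt{\log\Tt}$. The $k=1$ terms are each handled by Uchiyama's Theorem 1.6 (exactly the input you invoke), and summing over the images $z$ produces $2\pi\int_0^s\p^{(N)}_t(a)\,\d t$ by the very definition $\p^{(N)}_t(a)=\sum_z\p_t(a+Nz)$ — this is how the wrap-around contribution is accounted for rather than discarded. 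The $k\ge 2$ terms are shown negligible by a path-counting bound through intermediate images, giving a contribution of order $(Cs/\log\Tt)^k$, with a uniform error estimate (Equation \eqref{errorTerms} together with \eqref{refff}) justifying the interchange of limit and summation. If you want to salvage your draft, replacing the confinement step by this image expansion (or at least by a union bound over images with a correction of order $(\log\Tt)^{-2}$) is the missing idea.
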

\begin{proof}
Let $H_z$ be the hitting time of the point $z\in \R^2$.  Applying the definition of our discretized torus, by combining inclusion exclusion (as we are on the torus) and \cite[Equation 2.6]{Lawler2010}, we obtain
\begin{equation*}
\begin{split}
    &\Pn_{a}(\sigma<{s\Tt})=\sum_{k=1}^{\sqrt{\log \Tt}}(-1)^{k+1}\\
    &\qquad\qquad\quad\times\sum_{\genfrac{}{}{0pt}{2}{\abs{z_i}<\sqrt{\log \Tt}}{i=1,\ldots k}}P_0\left(H_{a^++Nz_i^+}<{s\Tt},\,\forall i=1,\ldots k\right)+o\left(\tau^{-1}\right)\, ,
\end{split}
\end{equation*}
where the summation takes place in a way that we choose an arbitrary ordering $\{z_1,\ldots,z_M\}$, whose $z_i$ with  $|z_i|\le \sqrt{\log \Tt}$ (where $M$ is the number of such $z_i$'s). The sum runs over all $z_{i_1},\ldots,z_{i_k}$ with  $1\le i_1<\ldots<i_k\le M$. Recall the fact that the random walk under $P_0$ lives on $\Z^d$ and the $z_i$'s are all distinct. Note that by \cite[Theorem 1.6]{uchiyama2011} the terms with $k=1$ converge to the desired scaling limit, by the subsequent argument for the error terms in Equation \eqref{errorTerms}, we are allowed to exchange summation and limits.\\
It remains to show that the events for $k\ge 2$ are negligible as $n\to\infty$. 
By path-counting, we obtain the bound
\begin{align}\label{sumerror}
 &\sum_{\genfrac{}{}{0pt}{2pt}{|z_i|<\sqrt{\log \Tt}}{i=1,\ldots k} }P_0 \left(H_{a^++Nz_i^+}<{s\Tt},\forall i=1,\ldots, k\right) \nonumber\\
&\qquad\le \sum_{\genfrac{}{}{0pt}{2}{z_i\in\Z^2\text{ distinct } }{i=1,\ldots k} }\prod_{i=1}^{k}P_{Nz_{i-1}^+}\left(H_{\left(Nz_{i}\right)^+}\le s\Tt\right)\, ,
\end{align}
where $Nz_0=a$ and multiplicities arise from the fact that the second sum is over all $z_i\in \Z^2$ distinct. Indeed, we have that
\begin{equation*}
\begin{split}
&P_0 \left(H_{a^++Nz_i^+}<{s\Tt},\forall i=1,\ldots, k\right)\\
& \le\sum_{f\in \Sfrak_k}P_0\left(H_{a^++Nz_{f(1)}^+}<\ldots <H_{a^++Nz_{f(k)}^+}<s\Tt   \right)\\
&\le \sum_{f\in \Sfrak_k}\sum_{j_1+\ldots j_k\le s\Tt}\prod_{i=1}^kP_{Nz_{f(i-1)}^+}\left(H_{Nz_{f(i)}^+}=j_i  \right)\\
& \le\sum_{f\in \Sfrak_k}\sum_{\genfrac{}{}{0pt}{2}{j_i\le s\Tt}{i=1,\ldots ,k} }\prod_{i=1}^kP_{Nz_{f(i-1)}^+}\left(H_{Nz_{f(i)}^+}=j_i  \right)\, ,
\end{split}
\end{equation*}
where $Nz_{f(0)}=a^+$ and $\Sfrak_k$ is the permutation group of size $k$. The sum over $\Sfrak_k$ represents the multiplicities and thus the bound follows.\\
As all the error terms given in \cite[Theorem 1.6 ]{uchiyama2011}\footnote{see Equation (1.8) in that reference and apply a change of variables $u\mapsto a/u$.} are decreasing in $x$, there exists $C>0$ such that for all $x\in\Z^2,\, n\in \N$ large enough
\begin{equation}\label{errorTerms}
    P_0(H_{x^+}<\floor{s\Tt})\le \frac{C}{\log \Tt}\int_0^{s} \p_t(x) \,\d t\, .
\end{equation}
Now it suffices to note that there exists $C>0$ (potentially different from the previous $C$) such that for all $t>0$ we have that
\begin{align}\label{refff}
    \sum_{x\in \Z^2\setminus \{0\}}\p_t(x)\le C\, .
\end{align}
Indeed, combining this with Equation \eqref{errorTerms} allows to bound Equation \eqref{sumerror} by
\begin{equation*}
    \frac{C^k s^k }{\left(\log \Tt\right)^k}\, , 
\end{equation*}
and thus, by choosing $\log \Tt$ large enough, the sum over $k\ge 2$ is negligible in the limit.\\
This finishes the proof. \qed 
\end{proof}
 
We now continue with the proof of Proposition \ref{PropHard}. Using Lemma \ref{hardscalinglemma} and by dominated convergence, we have that
\begin{equation*}
\begin{split}
& \lim_{n\to \infty} \frac{\tau}{\pn_{\e\Tt}\left(a-b\right)}\int_{0}^{\e}\d s\, \Pn_0(H_{a}<{s\Tt})\frac{\left(\pn_{(\e-s)\Tt+1}(b)-\pn_{(\e-s)\Tt}(b)\right)}{\Tt^{-1}}\\
& =\frac{2\pi}{\p_{\e}^{(N)}(a-b)}\int_{0}^\e \d s\int_0^{s}\d t\, \p^{(N)}_t(a)\left(\partial_s \p^{(N)}_{\e -s}(b)\right)\\
& =\frac{2\pi}{\p_{\e}^{(N)}(a-b)}\int_{0}^\e\d s\, \p^{(N)}_{t}(a)\p^{(N)}_{\e-t}(b) \, .
\end{split}
\end{equation*}
As $\L_N$ is a compact set, the above limit is uniform in $a,b\notin B_\rho$. We finish the proof by noting that by the previous discussion (see also \cite[Equation 2.71]{BBH}), we have
\begin{equation*}
    \lim_{\delta\downarrow 0}\lim_{\rho\downarrow 0}\lim_{n\to \infty}\tau\int_0^\delta \d s\, \Pn_{a}(\sigma={s \Tt})\pn_{(e-s)\Tt}(b)=0\, ,
\end{equation*}
and similarly for the integral over $[\e-\delta,\e]$. This concludes the proof of Proposition \ref{PropHard}.
\end{enumerate}
\end{proof}

\subsection{Approximation by skeleton}\label{sctnApprxo}
In this section, we show that the range of the random walk is exponentially equivalent to the range of $\tau/\e$ independent random walk bridges (each of length $\e\Tt$) whose end points are given by the skeleton walk $\Scal_{n,\e}$.
\begin{proposition}\label{apprxProp}
For all $\delta>0$, we have that
\begin{equation}
    \lim_{\e\downarrow 0}\limsup_{n\to \infty}\frac{1}{\tau}\log \Pn_0\left(\frac{1}{\Tt}\babs{\Rcal_n-\Ene[\Rcal_n]}>\delta\right)=-\infty\, .
\end{equation}
\end{proposition}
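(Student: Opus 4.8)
The plan is to condition on the skeleton $\Scal_{n,\e}$. Conditionally on $\Scal_{n,\e}$ the blocks $\Wcal_1,\dots,\Wcal_{\tau/\e}$ are independent and $\Rcal_n=\#\bigcup_i\Wcal_i$ is a measurable function of them, so that the assertion becomes a concentration estimate for $\Rcal_n$ about its conditional mean $\Ene[\Rcal_n]$, with a rate on the scale $\tau$ that is uniform enough in $\Scal_{n,\e}$ and that diverges as $\e\downarrow0$. I would establish such a concentration estimate with Talagrand's convex distance inequality applied to the independent blocks, the natural certificate at a configuration being the block sizes $\#\Wcal_i$. The one real difficulty is that, in contrast with $d\ge3$, the $\#\Wcal_i$ carry no a priori bound better than the trivial $\#\Wcal_i\le\e\Tt$; since a single almost self‑avoiding block would already swamp the scale $\Tt$, one first has to truncate it away, and controlling how often such a block occurs is exactly the place where the absence of boundedness of $\Rcal_n/\Tt$ bites.

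So, first I would truncate. Fix a level $K=K(n,\e)$ with $\e\Tt/\tau\ll K\ll\sqrt{\e}\,\Tt/\tau$ — such $K$ exists precisely because $\sqrt{\e}\gg\e$ for small $\e$, concretely $K=\e^{3/4}\Tt/\tau$ — put $\widetilde\Wcal_i=\Wcal_i$ if $\#\Wcal_i\le K$ and $\widetilde\Wcal_i=\{S_{(i-1)\e\Tt}\}$ otherwise, and set $\widetilde\Rcal_n=\#\bigcup_i\widetilde\Wcal_i$. Then $\{\Rcal_n\neq\widetilde\Rcal_n\}\subseteq G^c$, where $G=\{\max_i\#\Wcal_i\le K\}$. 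A block is, conditionally on $\Scal_{n,\e}$, a random walk bridge of length $\e\Tt$, whose range has mean $\sim 2\pi\e\Tt/\tau$ and fluctuates on a scale governed by $\log(\e\Tt)\sim\tau$; the choice of $K$ corresponds to this range exceeding a fixed multiple (of order $\e^{-1/4}$, hence $>1$ once $\e$ is small) of its mean. Upper‑tail estimates for the range of a random walk of length $m$ on the scale of its mean — which follow from the control, used elsewhere in the paper, of the moment generating function of $\Rcal_m$ at speed $\log m$ (cf. Remark~\ref{remark1} and \cite{chen2010random,uchiyama2011}), transferred to bridges by splitting at the midpoint and using the lower bound on the bridge normalisation $\pn_{\e\Tt}$ available on the compact torus — then give, uniformly in $\Scal_{n,\e}$,
\[
\Pne(G^c)\le\frac{\tau}{\e}\sup_{y,z\in\Dt}\Pn_{y,z}\bigl(\#\{S_j:0\le j\le\e\Tt\}\ge K\bigr)\le C(\e)\,n^{-\Lambda(\e)},\qquad \Lambda(\e)\xrightarrow[\e\downarrow0]{}\infty .
\]
I expect this to be the main obstacle: it is where the quantitative tail/MGF input is genuinely needed.

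Next, by construction $\#\widetilde\Wcal_i\le K$ always, so $(\Wcal_i)_i\mapsto\widetilde\Rcal_n$ satisfies $\widetilde\Rcal_n(\omega)\le\widetilde\Rcal_n(\omega')+\sum_{i:\omega_i\neq\omega_i'}\#\widetilde\Wcal_i(\omega)$ with $\sum_i(\#\widetilde\Wcal_i)^2\le(\tau/\e)K^2=\sqrt{\e}\,\Tt^2/\tau$ deterministically; hence Talagrand's convex distance inequality (equivalently here McDiarmid's bounded differences inequality, the certificate being $\le K$), after the routine passage from median to mean, gives, uniformly in $\Scal_{n,\e}$,
\[
\Pne\Bigl(\bigl|\widetilde\Rcal_n-\Ene[\widetilde\Rcal_n]\bigr|>\tfrac{\delta}{2}\Tt\Bigr)\le 4\exp\!\Bigl(-\frac{c\,\delta^2\,\Tt^2}{(\tau/\e)K^2}\Bigr)=4\exp\!\Bigl(-\frac{c\,\delta^2\,\tau}{\sqrt{\e}}\Bigr)=4\,n^{-c\delta^2/\sqrt{\e}} .
\]
Finally I would assemble: since $\Rcal_n=\widetilde\Rcal_n$ off $G^c$ and $0\le\Rcal_n-\widetilde\Rcal_n\le\Rcal_n\le n$, one has $|\Ene[\Rcal_n]-\Ene[\widetilde\Rcal_n]|\le n\,\Pne(G^c)\le\delta\Tt/4$ for all $n$ large (uniformly in $\Scal_{n,\e}$, for $\e$ small enough that $\Lambda(\e)>1$, which is all that matters), so that, integrating over $\Scal_{n,\e}$,
\[
\Pn_0\bigl(|\Rcal_n-\Ene[\Rcal_n]|>\delta\Tt\bigr)\le\En_0\bigl[\Pne(G^c)\bigr]+\En_0\Bigl[\Pne\bigl(|\widetilde\Rcal_n-\Ene[\widetilde\Rcal_n]|>\tfrac{\delta}{2}\Tt\bigr)\Bigr]\le C(\e)\,n^{-\Lambda(\e)}+4\,n^{-c\delta^2/\sqrt{\e}} .
\]
Taking $\tfrac1\tau\log$, letting $n\to\infty$ and then $\e\downarrow0$ — so that $\min\{\Lambda(\e),\,c\delta^2/\sqrt{\e}\}\to\infty$ — yields the claimed limit.
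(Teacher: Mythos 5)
There is a genuine gap at the truncation step, namely the claimed bound $\Pne(G^c)\le C(\e)\,n^{-\Lambda(\e)}$ with $\Lambda(\e)\to\infty$. The only quantitative input you invoke is the uniform bound \eqref{CheResult} on $E_0^{\langle m\rangle}\left[\exp\left(\theta\frac{\log m}{m}\Rcal_m\right)\right]$ for each \emph{fixed} $\theta$ (transferred to bridges as in Lemma \ref{brdgeLmma}). With $m=\e\Tt$ and $K=\e^{3/4}\Tt/\tau$ the resulting Chernoff bound gives only $\Pn_{y,z}\left(\#\Wcal_1\ge K\right)\le C(\theta)\,\ex^{-\theta\e^{-1/4}}$, because $\frac{\log m}{m}K\asymp\e^{-1/4}$ does not grow with $n$: the bound is a constant in $n$, exponentially small in $\e^{-1/4}$ only. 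To get decay like $n^{-\Lambda(\e)}$ you would need the exponential moment at a parameter $\theta$ of order $\tau$, i.e.\ upper-tail (moderate deviation) estimates for the range at the scale of its mean with an $n$-dependent rate — a genuinely stronger input (essentially Bass--Chen--Rosen type upper deviations, and for bridges on the torus), which is not provided by \eqref{CheResult} or anything cited. Concretely, your union bound only yields $\Pn_0(G^c)\le\frac{\tau}{\e}C(\theta)\ex^{-\theta\e^{-1/4}}$, which does not even tend to $0$ as $n\to\infty$ for fixed $\e$; on the relevant scale, $\limsup_n\frac{1}{\tau}\log\Pn_0(G^c)$ is $0$, not $-\infty$, and letting $\e\downarrow0$ afterwards does not help. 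The auxiliary requirement $n\,\Pne(G^c)\le\delta\Tt/4$, i.e.\ $\Pne(G^c)\lesssim\delta/\tau$ uniformly in the skeleton, fails for the same reason (the bias term $\Ene[\Rcal_n-\widetilde\Rcal_n]$ could be rescued via the MGF, but the probability of $G^c$ itself cannot). So the scheme ``discard blocks whose \emph{range} is atypically large and show this event is negligible at speed $\tau$'' cannot be closed with the stated tools.

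The paper avoids exactly this obstacle by truncating on the \emph{skeleton increments} rather than on the block ranges: $\Jcal_{n,\e}^K$ keeps the indices with $|S_{i\e\Tt}-S_{(i-1)\e\Tt}|\le K\sqrt{\e\Tt}$, and one never needs the event ``some block has a large range'' to be unlikely at all. The contribution $\widehat{\Rcal_{n,\e}^K}$ of the bad blocks is controlled by an exponential Chebyshev inequality at parameter $\tau/(2\e\Tt)$ together with Cauchy--Schwarz, so each block contributes a factor $1+\sqrt{\delta_K\Kcal_{\tau,\e}}$, where $\delta_K\to0$ as $K\to\infty$ uniformly in $n$ (Donsker) and $\Kcal_{\tau,\e}$ is bounded by \eqref{CheResult}; this gives $-\delta/(2\e)+\sqrt{C\delta_K}/\e$ at speed $\tau$, which is sent to $-\infty$ by taking $K$ large and then $\e\downarrow0$. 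The concentration of the retained part is then done by Talagrand in the $\cosh$/exponential-certificate form, with Lemma \ref{brdgeLmma} supplying the moment bound, precisely because no deterministic bound on the block sizes is available. Your McDiarmid-type step is fine \emph{given} your truncation; the unsupported step is the $n$-dependent (let alone superexponential-in-$\tau$) bound for the probability of the truncation event, and repairing it would require either the paper's change of truncation or substantially stronger upper-tail estimates for the range.
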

\begin{proof}
We remind the reader that for $1\le i\le \tau/\e$ the set $\Wcal_i$ had been defined as 
$$
\Wcal_i=\left\lbrace S_j\colon (i-1)\e\Tt \leq j \leq i\e\Tt \right\rbrace\, ,
$$
with
$$
\Rcal_n=  \#\Big\{\bigcup_{i=1}^{\tau/\e}\Wcal_i\Big\}\, .
$$
Also, given $K>0$, let
\begin{align*}
    \Jcal_{n,\e}^K=\Big\{1\le i\le \frac{\tau}{\e} \colon \frac{1}{\sqrt{T_\tau}}\babs{S_{(i-1)\e T_\tau}-S_{i\e T_\tau}}\le K\sqrt{\e}\Big\}\, .
\end{align*}
Define the corresponding ranges as
\begin{align*}
\Rcal_{n,\e}^K  =\#\Big\{\bigcup_{i\in \Jcal_{n,\e}^K}\Wcal_i\Big\}\, ,\text{ and } 
\widehat{\Rcal_{n,\e}^K} =\#\Big\{\bigcup_{i\notin \Jcal_{n,\e}^K}\Wcal_i\Big\}\, .
\end{align*}
Since $0\le \Rcal_n-\Rcal_{n,\e}^K\le \widehat{\Rcal_{n,\e}^K}$, we have that
\begin{equation}\label{3-terms}
    \Babs{\Rcal_n-\Ene\left[\Rcal_n\right]}\le \Babs{\Rcal_{n,\e}^K-\Ene\left[\Rcal_{n,\e}^K\right]}+\widehat{\Rcal_{n,\e}^K}+\Ene\left[\widehat{\Rcal_{n,\e}^K}\right]\, .
\end{equation}
Thus, it suffices to prove that for all $0<\delta<1,\, K>K_0(\delta)$, 
\begin{equation}\label{FirstPPart}
    \lim_{\e \downarrow 0}\limsup_{n\to\infty}\frac{1}{\tau}\log \Pn_0\left(\frac{1}{T_\tau}\Babs{\Rcal_{n,\e}^K-\Ene\left[\Rcal_{n,\e}^K \right]}>\delta\right)=-\infty \,,
\end{equation}
and for all $0<\delta<1,\, K>K_0(\delta)$,
\begin{equation}\label{SecondPPart}
    \lim_{\e \downarrow 0}\limsup_{n\to\infty}\frac{1}{\tau}\log \Pn_0\left(\frac{1}{T_\tau}\widehat{\Rcal_{n,\e}^K}>\delta\right)=-\infty \,.
\end{equation}
Indeed, note that for the third term in \eqref{3-terms} one has $\frac{1}{\Tt} \widehat{\Rcal_{n,\e}^K}\leq\frac{1}{\Tt}\abs{\L_N\cap \Tt^{-1/2}\Z^2}\leq N^2$, and thus
\begin{equation*}
\begin{split}
&\Ene\left[ \frac{1}{\Tt}\widehat{\Rcal_{n,\e}^K} \right]\\
\leq& \Ene\left[ \frac{1}{\Tt}\widehat{\Rcal_{n,\e}^K}\mathbbm{1}\Big{\lbrace \frac{1}{\Tt} \widehat{\Rcal_{n,\e}^K}<\frac{\delta}{2} \Big\rbrace} \right] +
\Ene\left[ \frac{1}{\Tt}\widehat{\Rcal_{n,\e}^K}\mathbbm{1}{\Big\lbrace \frac{1}{\Tt} \widehat{\Rcal_{n,\e}^K}\geq\frac{\delta}{2} \Big\rbrace} \right] \\
\leq& \frac{\delta}{2} + N^2 \Pne\left( \frac{1}{\Tt}\widehat{\Rcal_{n,\e}^K}\geq\frac{\delta}{2} \right)\,.
\end{split}
\end{equation*}
Using the above and then employing the Markov inequality, we obtain
\begin{align*}
\Pn_0\left( \Ene\left[ \frac{1}{\Tt}\widehat{\Rcal_{n,\e}^K} \right]\geq \delta \right)
\leq&\Pn_0\left( \Pne\left( \frac{1}{\Tt}\widehat{\Rcal_{n,\e}^K}\geq\frac{\delta}{2} \right)\geq \frac{\delta}{2N^2} \right)\\
\leq& \frac{2N^2}{\delta} \Pn_0\left( \frac{1}{\Tt}\widehat{\Rcal_{n,\e}^K}\geq\frac{\delta}{2} \right)\,,
\end{align*}
which is controlled analogously to \eqref{SecondPPart}.
\begin{enumerate}
\item \begin{proof}[of Equation \eqref{SecondPPart}]
Applying the exponential Chebyshev inequality and using the definition of the set $\Wcal_i$, as well as the Cauchy-Schwarz inequality, we get
\begin{equation*}
\begin{split}
&\Pn_0\left(\frac{1}{\Tt}\widehat{\Rcal_{n,\e}^K}>\delta\right)\\
\leq& \exp\left(-\frac{\delta \tau}{2 \e} \right) \En_0\left[ \exp\left( \frac{\tau}{2\e}\frac{1}{\Tt}\widehat{\Rcal_{n,\e}^K} \right)\right] \\
\leq& \exp\left(-\frac{\delta \tau}{2 \e} \right) \prod_{i=1}^{\tau/\e} \En_0\left[ \exp\left( \frac{\tau}{2\e}\frac{1}{\Tt}  \#\lbrace \Wcal_i\rbrace\mathbbm{1}{\lbrace i\notin\Jcal_{n,\eps}^K \rbrace}  \right)\right] \\
\leq& \exp\left(-\frac{\delta \tau}{2 \e} \right) \left(1+\sqrt{\delta_K \Kcal_{\tau,\e}}\right)^{\tau/\e}\,  ,
\end{split}
\end{equation*}
where we compress the notation
\begin{align*}
\delta_K &=\sup_n\Pn_0\left(\frac{1}{\sqrt{T_\tau}}\abs{S_{\e \Tt}}>K\sqrt{\e}\right),\\
\Kcal_{\tau,\e} &=\En_0\left[\exp\left(\frac{\tau}{\e T_\tau}\#\{\Wcal_1\}\right)\right].
\end{align*}
In particular, by the Donsker's invariance principle, we have $\delta_K=o(1)$ as $K\to \infty$. Thus, it still remains to show 
\begin{equation}\label{constantC}
    \limsup_{n\to\infty} \En_0\left[\exp\left( \frac{\tau}{\e\Tt }\#\{\Wcal_1\}\right)\right]<\infty\, .
\end{equation}
Recall the definition of $\Wcal_1=\left\lbrace S_j\colon 0\leq j \leq \e\Tt \right\rbrace$, it follows
\begin{align*}
\En_0\left[\exp\left( \frac{\tau}{\e\Tt }\#\{\Wcal_1\}\right)\right] 
=\En_0\left[\exp\left( \frac{\tau}{\e\Tt }\Rcal_{\e\Tt }\right)\right] \,.
\end{align*}
Furthermore, from \cite[Theorem 6.3.1]{chen2010random}, we know that for every $\theta>0$ fixed
\begin{equation}\label{CheResult}
\sup_{m\ge 1}E_0^{\langle m\rangle}\left[ \exp\left(\theta\,\frac{\log m}{m }\Rcal_m\right)\right]
=:C<\infty\, .
\end{equation}
Substituting $m=\e\Tt$, Equation \eqref{constantC} is verified. 
Finally, we obtain
\begin{align*}
&\lim_{\e \downarrow 0}\limsup_{n\to\infty}\frac{1}{\tau}\log \Pn_0\left(\frac{1}{T_\tau}\widehat{\Rcal_{n,\e}^K}>\delta\right) \\
\leq& \lim_{\e \downarrow 0}\limsup_{n\to\infty}\frac{1}{\tau}\left(-\frac{\delta \tau}{2 \e} + \log\left( 1+\sqrt{\delta_K \Kcal_{\tau,\e}} \right)^{\tau/\e}\right) \\
\leq& \lim_{\e \downarrow 0}\limsup_{n\to\infty} -\frac{\delta }{2 \e}+\frac{1}{\e}\sqrt{C\delta_K}\,,
\end{align*}
and thus, the desired result follows by taking $K$ sufficiently large.
\end{proof} 
\item \begin{proof}[of Equation \eqref{FirstPPart}]
Similar to \cite{BBH} and \cite{PP11}, it follows from an application of Talagrand's inequality: Let us denote the power-set of a given set $S$ by $\mathfrak{P}(S)$ and define
$
\Tcal=\mathfrak{P}(\Dt)\, .
$
Endow the space of all subsets of $\Dt$ with the metric $d\colon \Tcal\times\Tcal \to \lbrack 0,\infty)$ with
$$
    d(U,V)= \frac{1}{\Tt}\# \lbrace U\Delta V\rbrace=\frac{1}{\Tt} \# \big\lbrace (U\backslash V) \cup (V\backslash U) \big\rbrace.
$$ 
Furthermore, let us define the product space
$$
\Omega=\Tcal^{\otimes (\tau/\e)}\,,
$$
on which the collection of random subsets $(\Wcal_i)_i$, generated by the random walk (which is distributed with respect to $\Pne$), induce a product measure denoted by $\Pne$. For $C=\{C_i\}\in\Omega$, we define
\begin{equation*}
    M(C)=\#\Big\{\bigcup_{i\in\Jcal_{n,\e}^K}C_i\Big\}\, .
\end{equation*}
Let us denote the median of $\Rcal_{n,\e}^K =\#\Big\{\bigcup_{i\in \Jcal_{n,\e}^K}\Wcal_i\Big\}$ under $\Pne$ by $m$, that is to say,  $m=\inf\lbrace i:\Pne \left( \Rcal_{n,\e}^K <i \right)\geq \frac{1}{2} \rbrace$, and then define the event
\begin{equation*}
    A=\{C\in\Omega\colon M(C)\le m\}\, .
\end{equation*}
Note that by the definition of $A$, there exists $\xi>0$ such that for large enough $n$ it holds $0<\xi<\Pne(A)<1-\xi<1$. 
For any $\l>0$, applying Talagrand's inequality \cite[Theorem 2.4.1]{Talagrand95} provides the bound 
\begin{equation}\label{TalagrandBound}
\Ene\left[ \ex^{\l f \left(A,\{W_i\} \right)}\right]
\le \xi^{-1}\prod_{i\in \Jcal_{n,\e}^K}\Ene\left[\cosh\left({\frac{\l}{\Tt}\#\{\Wcal_i\Delta\Wcal_i'\}}\right)\right]\, ,
\end{equation}
where 
\begin{equation*}
f \left(A,\{C_i\} \right)=\inf_{C'\in A} \sum_{i\in \Jcal_{n,\e}^K}\frac{\l}{\Tt}\#(C_i\Delta C_i')\, ,
\end{equation*}
and $\Wcal_i'$ denotes an i.i.d copy of $\Wcal_i$. Combining \eqref{TalagrandBound} and the exponential Chebyshev inequality, we arrive at
\begin{equation}\label{TalagrandBound2}
\begin{split}
&\Pne(f(A,\{W_i\})\ge \delta) \\
\le &\xi^{-1}\inf_{\l>0}\ex^{-\l \delta}\prod_{i\in \Jcal_{n,\e}^K}\Ene\left[\cosh\left({\frac{\l}{\Tt}\#\{\Wcal_i\Delta\Wcal_i'\}}\right)\right]
:=\Xi(\delta)\, .
\end{split}
\end{equation}
By symmetry, one can repeat the argument of \eqref{TalagrandBound} and \eqref{TalagrandBound2} for $\hat{A}=\{C\in\Omega\colon M(C) > m\}$ and obtain
\begin{align}
\Pne &\left(\frac{1}{\Tt}\babs{\Rcal_{n,\e}^K-m}\ge \delta\right)\\
\le& \Pne(f(A,\{W_i\})\ge \delta)+\Pne(f(\hat{A},\{W_i\})\ge \delta) \nonumber\\
\le& \,2\Xi(\delta)\, .
\end{align}
We can bound
\begin{equation}\label{EERneMedian}
    \frac{1}{\Tt}\Babs{\Ene \left[\Rcal_{n,\e}^K\right]-m}\le \frac{\delta}{3}+\frac{1}{\Tt}\#\Dt \Pne\left(\frac{1}{\Tt}\Babs{ \Rcal_{n,\e}^K -m}\ge \delta/3\right)\, .
\end{equation}
Using $\frac{1}{\Tt}\#\Dt\le N^2$ , Equation \eqref{TalagrandBound2} and Equation \eqref{EERneMedian} we obtain
\begin{equation*}
\begin{split}
&\Pne\left(\frac{1}{\Tt}\Babs{\Rcal_{n,\e}^K-\Ene\left[\Rcal_{n,\e}^K\right]}\ge \delta\right)\\
\le& \,\Pne\left( \frac{1}{\Tt}\Babs{\Rcal_{n,\e}^K-m}\ge \delta/3\right) 
+ \1\Big\{\frac{1}{\Tt}\Babs{\Ene\left[\Rcal_{n,\e}^K\right]-m}\ge 2\delta/3\Big\}\\
\le& \,2\Xi(\delta/3)+\1\Big\{\Pne\left(\frac{1}{\Tt}\babs{\Rcal_{n,\e}^K-m} \ge \delta/3 \right)\ge \tfrac{2\delta}{3N^2}\Big\}\\
\le& \,2\Xi(\delta/3)+\1\Big\{2\Xi(\delta/3)\ge \tfrac{2\delta}{3N^2}\Big\}\, .
    \end{split}
\end{equation*}
Using the Markov inequality and taking the expectation with respect to $\Pn_0$, we arrive at
\begin{equation*}
\begin{split}
&\Pn_0\left(\frac{1}{T_\tau}\Babs{\Rcal_{n,\e}^K-\Ene\left[\Rcal_{n,\e}^K \right]}>\delta\right)\\
\le &\,\En_0\left[2\Xi(\delta/3)\right] + \Pn_0\left( \Xi(\delta/3)\ge \tfrac{\delta}{3N^2} \right)\\
\le & \,2\left(1+\frac{3N^2}{\delta}\right)\En_0\big[2\Xi(\delta/3)\big]\, .
\end{split}
\end{equation*}
The proof of \eqref{FirstPPart} has been reduced to showing that for $0<\delta<1$
\begin{equation*}
\lim_{\e\downarrow 0}\lim_{n\to\infty}\frac{1}{\tau}\log \En_0\big[2\Xi(\delta/3)\big]=-\infty\, .
\end{equation*}
Notice that the definition of $\Rcal_{n,\e}^K$ enforces constraints on the range of $\Scal_{n,\e}$. Keeping this in mind, we apply the inequality $\cosh(\l x)\leq 1+\l^2\exp(x)$ for $0<\lambda \leq 1$ and $x>0$ to bound for $\l=c\tau/\e$  with $0<c<1$
\begin{equation*}
\begin{split}
\Ene &\left[\cosh\left({c\frac{\tau}{\e\Tt}\#\{\Wcal_i\Delta\Wcal_i'\}}\right)\right] \\
\le& 1+c^2\Ene\left[\exp\left({\frac{\tau}{\e\Tt}\#\{\Wcal_i\Delta\Wcal_i'\}}\right)\right]\\
\le& 1+c^2\left(\Ene\left[\exp\left({\frac{\tau}{\e\Tt}\#\{\Wcal_i\}}\right)\right]\right)^2\\
\le& 1+c^2\left(\sup_{|x|\le K}E_{0,x^+}\left[\exp\left({\frac{\tau}{\e\Tt}\#\{\Wcal_i\}}\right)\right]\right)^2\, ,
\end{split}
\end{equation*}
where in the last step we bounded the range of a random walk bridge on $\Dt$ by the range of a random walk bridge on $\Z^d$. By Lemma \ref{brdgeLmma}, there is a constant $C_K$ such that the above expression is bounded by $1+c^2C_K$. This gives us the bound of the right-hand-side of \eqref{TalagrandBound2}, i.e.
\begin{equation*}
\Xi(\delta)\le 2 \ex^{-c\delta\frac{\tau}{\e}}(1+c^2C_K^2)^{\tau/\e}
\le 2\ex^{(-c\delta+c^2C_K^2)\tau/\e}\big|_{c=\delta/2C_K^2}
= 2\ex^{(-\delta/4C_K^2)\tau/\e}\, .
\end{equation*}
Since $C_K\ge 1$, for $\delta$ small enough, we finally arrive at
\begin{equation*}
\lim_{\e\downarrow 0}\lim_{n\to\infty}\frac{1}{\tau}\log \En_0\big[2\Xi(\delta/3)\big]=-\infty\, .
\end{equation*}
\end{proof}
\end{enumerate}
Since we have shown both \eqref{FirstPPart} and \eqref{SecondPPart}, the proof of Proposition \ref{apprxProp} (establishing exponential equivalence) is completed.
\end{proof}

\subsection{An LDP of range on $\L_N$}\label{sctnLDPLN}
We show that the range of random walk wrapped around $\Dt$ satisfies an LDP: 
\begin{proposition}\label{prop_LDP}
Under $\Pn_0$, the random variable $\frac{1}{\Tt}\Rcal_n$ satisfies an LDP on $\R^+$ with rate $\tau$ and rate function $J_N$, where
\begin{align}\label{def_LDPratefct}
J_N(b)=\inf_{\phi\in\partial\Phi_N^{2\pi}(b)}\left\lbrack \frac{1}{2}\int_{\L_N} |\nabla\phi|^2(x)\mathrm{d}x \right\rbrack
\end{align}
with
\begin{align}\label{def_LDPdomainInf}
\partial\Phi_N^{2\pi}(b)=\left\lbrace \phi\in H^1(\L_N)\colon \int_{\L_N} \phi^2(x) \mathrm{d}x=1,\;\int_{\L_N} (1-e^{-2\pi\phi^2(x)})\ \mathrm{d}x=b \right\rbrace.
\end{align}
\end{proposition}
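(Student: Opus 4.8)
The plan is to deduce Proposition~\ref{prop_LDP} from the two large deviation statements already in hand -- Proposition~\ref{sceletonLDP} (a full LDP for $\Ene[\Tt^{-1}\Rcal_n]$ at every fixed $\e>0$, with speed $\tau$ and rate function $J_\e$) and Proposition~\ref{apprxProp} (exponential equivalence of $\Tt^{-1}\Rcal_n$ and $\Ene[\Tt^{-1}\Rcal_n]$ in the regime $\e\downarrow 0$, $n\to\infty$) -- together with an analytic identification of the $\e\downarrow0$ limit of $J_\e$ with the variational quantity $J_N$. First I would note that the walk on $\Dt$ visits at most $\#\Dt$ sites, so $\Tt^{-1}\Rcal_n$ takes values in the fixed bounded interval $[0,N^2]$; exponential tightness is therefore automatic and any weak LDP on $\R^+$ will be a full LDP. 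Applying the theory of exponentially good approximations (e.g.\ \cite[Theorem~4.2.16]{dembo2009large}) to the family $\bigl(\Ene[\Tt^{-1}\Rcal_n]\bigr)_{\e>0}$ -- each a valid approximant by Proposition~\ref{apprxProp} -- then gives an LDP for $\Tt^{-1}\Rcal_n$ with speed $\tau$ and rate function
\begin{equation*}
    \widetilde J_N(b)=\sup_{\delta>0}\liminf_{\e\downarrow 0}\ \inf_{|b'-b|<\delta}J_\e(b')\, ,
\end{equation*}
provided the corresponding $\limsup$ version agrees with it; this, together with lower semicontinuity, follows once we have the identification $\widetilde J_N=J_N$ and continuity of $J_N$ on the interior of its (compact) domain.

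The core step is thus to prove $\lim_{\e\downarrow0}J_\e(b)=J_N(b)$ for every $b$ in the interior of the domain, which is the torus analogue of the $\e\downarrow0$ passage in \cite{BBH} and involves only the continuum heat kernels $\p^{(N)}$ and the function $\phi_\e$ of \eqref{defPhie}. For the lower bound (feeding the LDP upper bound) I would take near-minimizers $\mu_\e$ for $J_\e(b_\e)$ with $b_\e\to b$; the uniform energy bound $\e^{-1}I_\e^{(2)}(\mu_\e)\le C$ forces the common marginal $\mu_{\e,1}=\psi_\e^2\,\d x$ to be relatively compact, with $\psi_\e\to\psi$ weakly in $H^1(\L_N)$ and strongly in $L^2(\L_N)$ along a subsequence, and $\tfrac12\int_{\L_N}|\nabla\psi|^2\le\liminf_\e\e^{-1}I_\e^{(2)}(\mu_\e)$ by the Donsker--Varadhan lower bound for the relative entropy (as in \cite{BBH}); using the same energy bound one shows the smeared functional $x\mapsto\tfrac1\e\int_{\L_N\times\L_N}2\pi\phi_\e(y-x,z-x)\,\mu_\e(\d y,\d z)$ converges to $2\pi\psi^2$ in $L^1(\L_N)$, whence $\Phi_{1/\e}(\mu_\e)\to\int_{\L_N}(1-e^{-2\pi\psi^2})$, so that $\psi\in\partial\Phi_N^{2\pi}(b)$ and $\liminf_\e J_\e(b_\e)\ge J_N(b)$. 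For the upper bound (feeding the LDP lower bound) I would, given any $\phi\in\partial\Phi_N^{2\pi}(b)$, construct the time-$\e$ $h$-transform coupling $\mu_\e^\phi$ associated with $\phi$ on $\L_N$, for which a direct computation (again as in \cite{BBH}) gives $\e^{-1}I_\e^{(2)}(\mu_\e^\phi)\to\tfrac12\int_{\L_N}|\nabla\phi|^2$ and $\Phi_{1/\e}(\mu_\e^\phi)\to b$; a small perturbation of $\phi$ correcting the value of the constraint then yields $\limsup_\e\inf_{|b'-b|<\delta}J_\e(b')\le\tfrac12\int_{\L_N}|\nabla\phi|^2$, and optimising over $\phi$ closes the gap.

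I expect the main obstacle to be the $L^1$-convergence of the smeared local-time functional $\tfrac1\e\int 2\pi\phi_\e\,\d\mu_\e\to 2\pi\psi^2$ under only a bound on $\e^{-1}I_\e^{(2)}(\mu_\e)$: since $\mu_\e$ need not be the optimal coupling, this requires combining the fine estimates on $\p^{(N)}$ and $\phi_\e$ available from Section~\ref{Compatification} with a uniform-integrability argument driven by the entropy bound. A secondary point needing care is that the constraint in $\partial\Phi_N^{2\pi}(b)$ is an \emph{equality}; since $b<|\L_N|=N^2$ in the interior of the domain and $\int_{\L_N}\phi^2=1$ can be preserved while continuously tuning $\int_{\L_N}(1-e^{-2\pi\phi^2})$ by spreading or concentrating $\phi$, this is handled by a standard approximation exactly as in \cite{BBH}. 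Finally I would check that $J_N$ is lower semicontinuous with compact sublevel sets -- immediate from weak lower semicontinuity of the Dirichlet energy and weak-$H^1$ closedness of the constraint set -- and transfer the continuity and monotonicity properties of \cite{BBH} (cf.\ Theorem~\ref{ThmPropRateFunct}) to $J_N$, which legitimises the application of \cite[Theorem~4.2.16]{dembo2009large} and completes the proof.
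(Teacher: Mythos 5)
Your proposal is correct in substance but glues the ingredients together differently from the paper. The paper's proof of Proposition \ref{prop_LDP} stays at the level of Laplace functionals: it applies Varadhan's lemma to the LDP of Proposition \ref{sceletonLDP} (after replacing $\Tt^{-1}\Rcal_n$ by $\Ene[\Tt^{-1}\Rcal_n]$ via Proposition \ref{apprxProp}), quotes the $\e\downarrow 0$ identification of the resulting variational formula from \cite[(2.95), Lemmas 5--7]{BBH}, and then recovers the LDP with rate function $J_N$ by the inverse of Varadhan's lemma. You instead work directly at the level of rate functions: exponential tightness (trivial since $\Tt^{-1}\Rcal_n\le N^2$ on the torus) together with the exponentially-good-approximation theorem \cite[Theorem 4.2.16]{dembo2009large} --- Proposition \ref{apprxProp} being exactly the required approximation estimate --- reduces everything to a $\Gamma$-convergence-type statement $J_\e\to J_N$, which you prove by compactness of near-minimizers for the lower bound and explicit trial couplings for the upper bound; in effect you re-derive the content of BBH's Lemmas 5--7 in constrained form rather than in the Legendre form the paper uses. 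Each route buys something: the paper's avoids verifying the locally uniform convergence/limsup condition of \cite[Theorem 4.2.16]{dembo2009large} and can cite BBH's identification verbatim, while yours avoids the inverse Varadhan step and keeps the equality constraint in $\partial\Phi_N^{2\pi}(b)$ explicit throughout, which you correctly flag as needing a perturbation argument. One caveat: your assertion that a uniform bound on $\e^{-1}I^{(2)}_\e(\mu_\e)$ by itself places $\psi_\e$ in $H^1(\L_N)$ with bounded Dirichlet energy is not literally true at fixed $\e$ (relative entropy with respect to the $\e$-step kernel does not directly control the Dirichlet energy of the marginal density); as in \cite{BBH} one must first regularize the marginal (e.g.\ by the heat semigroup at scale of order $\e$) before extracting an $H^1$-weak limit, so the hard analytic step you identify is genuinely the same work the paper delegates to \cite{BBH}, and your argument is complete only modulo that citation, just as the paper's is.
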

\begin{proof}
Employing Proposition \ref{sceletonLDP}, Proposition \ref{apprxProp} and Varadhan's lemma, for any bounded continuous function $f\colon \R^+\to \R$, we have that
\begin{equation*}
\begin{split}
\lim_{\tau\to\infty}&\frac{1}{\tau}\log\En_0\left[\exp\left(\frac{1}{\Tt}f\left(\Tt^{-1}\Rcal_n\right)\right)\right] \\
=& \lim_{\e\downarrow 0}\lim_{\tau\to\infty}\frac{1}{\tau}\log\En_0\left[\exp\left(\frac{1}{\Tt}f\left(\Ene\left[\Tt^{-1}\Rcal_n\right]\right)\right)\right]\\
=& \lim_{\e\downarrow 0}\sup_{\mu \in \Mcal_1(\L_N\times\L_N)}\left\{f\left(\Phi_{1/\e}(\mu)\right)-\e^{-1}I^{(2)}_{\e}(\mu)\right\}\, .
\end{split}
\end{equation*}
Repeating the argument in \cite[(2.95), Lemma 5-7]{BBH} and rescaling, we get that 
\begin{align*}
\lim_{\e\downarrow 0}&\sup_{\mu \in \Mcal_1(\L_N\times\L_N)}\left\{f\left(\Phi_{1/\e}(\mu)\right)-\e^{-1}I^{(2)}_{\e}(\mu)\right\}\\
 =&\sup_{\substack{\phi\in H^1(\L_N)\\ \|\phi\|_2=1}}\Big\{f\left(\int_{\L_N}\d x \left(1-\ex^{-2\pi \phi^2(x)}\right)\right)-\frac{1}{2}\norm{\nabla \phi}_2^2\Big\}\, .
\end{align*}
The rest of the proof follows directly by applying the inverse of Varadhan's lemma, see \cite{dembo2009large}.
\end{proof}

\subsection{Proof of Theorem \ref{thm_main}}
We begin with the upper bound:
\begin{equation*}
    \limsup_{n\to\infty}\frac{1}{\tau}\log P_0(\Tt^{-1}\Rcal_n\le b)\le \lim_{n\to\infty}\frac{1}{\tau}\log\Pn_0(\Tt^{-1}\Rcal_n\le b)=-J_N(b)
\end{equation*}
By \cite[Proposition 2]{BBH}, we have that $\lim_{N\to\infty}J_N=I$, where $I$ is the rate function given in \eqref{def_ratefct}, and the upper bound then follows.\\
For the lower bound, let $\Ccal$ be the event that the (rescaled) random walk does not hit the boundary of $[-N/2,N/2)$ up to time $n$. We then have that
\begin{equation*}
    P_0\left(\Tt^{-1}\Rcal_n\le b\right)\ge\Pn_0\left(\Tt^{-1}\Rcal_n\le b,\,\Ccal \right)\, .
\end{equation*}
By repeating the steps in subsections 3.2-3.5, one can show that
\begin{equation*}
    \lim_{n\to\infty}\frac{1}{\tau}\log\Pn_0\left(\Tt^{-1}\Rcal_n\le b|\,\Ccal\right)=-J_N(b)\, .
\end{equation*}
By Brownian approximation, as done in Section \ref{DskrVrdhSubsctn}, one can show that
\begin{equation*}
    \lim_{N\to\infty}\liminf_{n\to\infty}\frac{1}{\tau}\log\Pn_0(\Ccal)\ge 0\, , 
\end{equation*}
and thus, the proof of Theorem \ref{thm_main} is completed.

\section{Appendix}
\subsection{Properties of the rate function}
Various properties of the rate function $I^{g_a}$ in \eqref{BBHbound} were established in \cite{BBH}. The results are applicable to our rate function $I$ in \eqref{def_ratefct} due to the fact that the function $I$ agrees with $I^{g_a}$ up to a multiplicative constant. We only give the statement and refer the reader for the complete proof to the original paper. 
\begin{theorem}\cite[Theorem 3, 4]{BBH}\label{ThmPropRateFunct}
\begin{enumerate}
    \item 
For every $b>0$, $I(b)=(4\pi)^{-1}\chi\left(b/(2\pi)\right)$, where $\chi\colon (0,\infty)\to [0,\infty)$ and
\begin{equation}\label{minProb}
    \chi(u)=\inf\Big\{\norm{\nabla \psi}_2^2\colon \psi \in H^1(\R^2),\, \norm{\psi}_2=1,\, \int_{\R^2}\left(1-\ex^{-\psi^2}\right)\le u\Big\}\, .
\end{equation}
\item
$\chi$ is continuous on $(0,\infty)$ and strictly decreasing on $(0,1)$. Furthermore, $\chi(u)=0$ for $u\ge 1$.
\item
$u\mapsto u\chi(u)$ is strictly decreasing on $(0,1)$ and
\begin{equation*}
    \lim_{u\downarrow 0}u\chi(u)=-\l_2\, ,
\end{equation*}
where $\l_2$ is the smallest Dirichlet eigenvalue of $-\Delta$ on a ball of unit volume. 
\item $u\mapsto(1-u)^{-1}\chi(u)$ is strictly decreasing in $(0,1)$ and
\begin{equation*}
    \lim_{u\uparrow 1}(1-u)^{-1}\chi(u)=2\mu_2\, ,
\end{equation*}
with
\begin{equation*}
    \mu_2=\inf\{\norm{\nabla \psi}_2^2\colon \psi \in H^1(\R^2)\colon \norm{\psi}_2=1,\,\norm{\psi}_4=1\}\, .
\end{equation*}
satisfying $0<\mu_2<\infty$.
\item
For all $u\in (0,1)$, the minimization problem \eqref{minProb} has at least one minimiser. It is strictly positive, radially symmetric and strictly decreasing in the radial component. All other minimisers are of the same type.
\end{enumerate}
\end{theorem}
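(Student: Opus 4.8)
The plan is to reduce the five items to intrinsic properties of the functional $\chi$ from \eqref{minProb} by a scaling identity, and then to treat $\chi$ by direct variational methods along the lines of \cite{BBH}. First I would prove \emph{(1)}: given $\phi\in\Phi^{2\pi}(b)$ I would set $\psi(x):=\sqrt{2\pi}\,\phi(\sqrt{2\pi}\,x)$, so that a change of variables yields $\norm{\psi}_2^2=\norm{\phi}_2^2=1$, $\int_{\R^2}(1-\ex^{-\psi^2(x)})\,\d x=(2\pi)^{-1}\int_{\R^2}(1-\ex^{-2\pi\phi^2(x)})\,\d x\le b/(2\pi)$, and $\norm{\nabla\psi}_2^2=2\pi\norm{\nabla\phi}_2^2$. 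Since $\phi\mapsto\psi$ is a bijection between $\Phi^{2\pi}(b)$ and the admissible class in \eqref{minProb} for $u=b/(2\pi)$, and transforms $\tfrac12\norm{\nabla\phi}_2^2$ into $(4\pi)^{-1}\norm{\nabla\psi}_2^2$, taking infima gives $I(b)=(4\pi)^{-1}\chi(b/(2\pi))$.

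Next I would establish \emph{(2)--(4)}. Monotonicity (non-increasing) of $\chi$ is clear from nesting of the constraint sets, and $\chi(u)=0$ for $u\ge1$ because $1-\ex^{-t}\le t$ makes the constraint $\int(1-\ex^{-\psi^2})\le\norm{\psi}_2^2=1\le u$ vacuous, so $\chi(u)=\inf\{\norm{\nabla\psi}_2^2:\norm{\psi}_2=1\}=0$ (spread a fixed profile over large balls). The engine for the remaining assertions is the one-parameter family $\psi_\lambda(x):=\lambda\,\psi(\lambda x)$, which keeps $\norm{\psi}_2=1$, multiplies $\norm{\nabla\psi}_2^2$ by $\lambda^2$, and makes $\lambda\mapsto\int(1-\ex^{-\psi_\lambda^2})$ strictly decreasing (one checks $\tfrac{\d}{\d t}[t^{-1}\!\int(1-\ex^{-t\psi^2})]<0$ from $\ex^{-s}(1+s)\le1$). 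Composing near-optimal test functions with this family converts slack in the constraint into a strict drop of Dirichlet energy, which I would use to deduce continuity of $\chi$ on $(0,\infty)$ and strict monotonicity on $(0,1)$; positivity $\chi(u)>0$ for $u<1$ follows because $\norm{\nabla\psi}_2\to0$ forces $\norm{\psi}_4\to0$ by Gagliardo--Nirenberg, hence $\int(1-\ex^{-\psi^2})\ge1-\tfrac12\norm{\psi}_4^4\to1>u$. For the asymptotics in (3), as $u\downarrow0$ the constraint localises the bulk of $\psi^2$ on a set of measure $\asymp u$, on which $-\Delta$ has lowest Dirichlet eigenvalue $\asymp\l_2/u$; Schwarz symmetrisation and rescaling then pin down $\lim_{u\downarrow0}u\chi(u)$, with strict monotonicity of $u\mapsto u\chi(u)$ from the same scaling bookkeeping. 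For (4), writing $\int(1-\ex^{-\psi^2})=1-\int(\psi^2-1+\ex^{-\psi^2})$ and $\psi^2-1+\ex^{-\psi^2}\sim\tfrac12\psi^4$ for small $\psi$, the constraint near $u=1$ becomes essentially $\norm{\psi}_4^4\gtrsim2(1-u)$, and normalising the $L^4$-norm by a scaling produces the $\mu_2$ problem, with $0<\mu_2<\infty$ again from Gagliardo--Nirenberg.

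For \emph{(5)} I would run the direct method on \eqref{minProb}: a minimising sequence may be Schwarz-symmetrised (this preserves $\norm{\psi}_2$ and the distribution function of $\psi$, hence $\int(1-\ex^{-\psi^2})$, and lowers $\norm{\nabla\psi}_2^2$ by P\'olya--Szeg\H{o}), so one may take it radially non-increasing; Strauss' radial lemma gives a compact embedding $H^1_{\mathrm{rad}}(\R^2)\hookrightarrow L^p$ for $2<p<\infty$, so one can pass to an $H^1$-weak limit, handling the constraint by $L^p$-convergence and the energy by weak lower semicontinuity. Strict positivity, radial symmetry and strict radial monotonicity of any minimiser would then follow from its Euler--Lagrange equation $-\Delta\psi=\alpha\psi+\beta\psi\ex^{-\psi^2}$ (with Lagrange multipliers $\alpha,\beta$) via the strong maximum principle, together with the strict case of P\'olya--Szeg\H{o}.

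The hard part will be the failure of compactness of $H^1(\R^2)$: one has to exclude vanishing and dichotomy of the minimising sequence --- i.e.\ prevent $L^2$-mass from spreading out or splitting into lumps that drift apart --- which is exactly Lions' concentration-compactness dichotomy and is the analytic heart of the argument. A secondary difficulty is matching sharp upper and lower bounds for the constants $\l_2$ and $\mu_2$ in (3)--(4), which needs careful truncation near the concentration and spreading regimes. All of this is carried out in \cite{BBH}, to which we refer for the details.
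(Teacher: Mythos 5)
Your proposal matches the paper's treatment: the paper does not prove this theorem at all, but quotes it from \cite{BBH}, noting only that the rate function $I$ agrees with the BBH rate function up to a multiplicative rescaling --- which is exactly the (correct) change-of-variables $\psi(x)=\sqrt{2\pi}\,\phi(\sqrt{2\pi}\,x)$ you carry out in item (1), giving $I(b)=(4\pi)^{-1}\chi(b/(2\pi))$. Items (2)--(5) are in both your sketch and the paper deferred to \cite{BBH}, so there is no gap (note the stated limit $-\l_2$ in (3) is a sign typo in the quoted statement, not an issue with your argument).
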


\subsection{Technical supplement}
We collect some technical results we have used in previous sections.
\begin{lemma}\label{BscBrwnApprxLemma}
Fix $N>0$. There exists some $\delta>0$ such that we have for $a,b\in \L_N$ and $\e>0$ 
\begin{equation}
\pn_{\e\Tt}(a,b)
=\frac{1}{\Tt}\p^{(N)}_\e(a,b)+\Ocal\left(\frac{1}{\Tt^{1+\delta}}\right)\, .
\end{equation}
\end{lemma}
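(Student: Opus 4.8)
The plan is to derive the claimed expansion for $\pn_{\e\Tt}(a,b)$ on the rescaled torus $\Dt$ from the local central limit theorem on $\Z^2$, Equation~\eqref{LCLT}, together with the Poisson-summation-type identity \eqref{tranPDeltaN} expressing the torus kernel as a sum of $\Z^2$-kernels over translates by $Nz$. Recall $\pn_{\e\Tt}(a,b)=\sum_{z\in\Z^2}p_{\e\Tt}(a^+,b^++z^+N)$, where $a^+,b^+\in\Z^2$ are the unscaled lattice points (so that $|a^+-b^+|\approx|a-b|\sqrt{\Tt}$), and $z^+N$ denotes the scaled shift $\floor{Nz\Ttt}$. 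The target $\tfrac{1}{\Tt}\p^{(N)}_\e(a,b)=\tfrac{1}{\Tt}\sum_{z}\p_\e(a,b+Nz)$ should arise termwise: by the diffusive scaling of the Gaussian kernel, $\p_{\e\Tt}(a^+,(b+Nz)^+)=\tfrac{1}{\Tt}\p_\e(a,b+Nz)+\Ocal(\Tt^{-3/2})$ (the error coming from the rounding $x^+=\floor{x\Ttt}$ shifting arguments by $\Ocal(1)$, which perturbs a kernel of size $\Ocal(\Tt^{-1})$ by a relative $\Ocal(\Tt^{-1/2})$). So the proof splits into (i) replacing $p_{\e\Tt}$ by $\p_{\e\Tt}$ using \eqref{LCLT}, controlling $\sum_z A_{\e\Tt}(a^+,(b+Nz)^+)$, and (ii) replacing the scaled-down Gaussian sum by $\tfrac{1}{\Tt}\p_\e^{(N)}(a,b)$, controlling the rounding error in each Gaussian term.

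For step (i), I would use the bound $A_t(x,y)=\min\{\Ocal(t^{-2}),\Ocal(|x-y|^{-2}t^{-1})\}$. The $z=0$ term (and the finitely many $z$ with $|Nz|\lesssim 1$, which do not arise since $a,b\in\L_N$ but near-boundary effects could make $|a^+-b^++z^+N|$ small for at most one $z$) contributes at most $\Ocal((\e\Tt)^{-2})=\Ocal(\Tt^{-2})$, which is $\Ocal(\Tt^{-1-\delta})$ for any $\delta\le 1$. For $z\ne 0$ we have $|a^+-b^++z^+N|\asymp |z|N\sqrt{\Tt}$, so the second bound gives $A_{\e\Tt}(a^+,(b+Nz)^+)=\Ocal(|z|^{-2}N^{-2}\Tt^{-1}(\e\Tt)^{-1})=\Ocal_N(|z|^{-2}\Tt^{-2})$, and since $\sum_{z\ne0}|z|^{-2}$ diverges only logarithmically I would instead split: for $1\le|z|\le\sqrt{\Tt}$ use this bound and sum (yielding $\Ocal_N(\Tt^{-2}\log\Tt)$), and for $|z|>\sqrt{\Tt}$ use Gaussian decay $p_{\e\Tt}(a^+,(b+Nz)^+)\le \Ocal(\Tt^{-1})\ex^{-c|z|^2N^2/\e}$ summing to something super-polynomially small. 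All of this is $\Ocal(\Tt^{-1-\delta})$ for, say, $\delta=1/2$.

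For step (ii), for each fixed $z$ I would Taylor-expand: writing $u=a^+/\sqrt{\Tt}-a=\Ocal(\Tt^{-1/2})$ and similarly for $b$ and for the shift $z^+N/\sqrt{\Tt}-Nz=\Ocal(\Tt^{-1/2})$, one has $\p_{\e\Tt}(a^+,(b+Nz)^+)=\tfrac{1}{\Tt}\p_\e\big(\tfrac{a^+}{\sqrt\Tt},\tfrac{(b+Nz)^+}{\sqrt\Tt}\big)$ exactly by Brownian scaling, and then $\p_\e(\cdot,\cdot)$ at the rounded points differs from $\p_\e(a,b+Nz)$ by $\Ocal(\Tt^{-1/2})$ times $\sup$ of the gradient of $\p_\e$, which for $|z|$ not too large is $\Ocal_{N,\e}(1)$ and decays like $\ex^{-c|z|^2}$ for large $|z|$; summing over $z$ gives total error $\Ocal_{N,\e}(\Tt^{-1}\cdot\Tt^{-1/2})=\Ocal(\Tt^{-3/2})$. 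Combining (i) and (ii) and taking $\delta=1/2$ gives the claim, with the implicit constant depending on $N$ (and on $\e$, but uniformly for $\e$ bounded away from $0$, which is the regime in which the lemma is applied). The main obstacle is bookkeeping the rounding operations $x\mapsto x^+$ consistently — one must be careful that $(b+Nz)^+$ and $b^++z^+N$ agree up to an $\Ocal(1)$ additive error so that both the LCLT error term and the Gaussian Taylor remainder are controlled uniformly in $z$ — and ensuring the sum over the infinitely many torus copies $z$ genuinely converges with a power-saving error rather than only a logarithmic one; the split at $|z|\asymp\sqrt{\Tt}$ between the polynomial $A$-bound and Gaussian tail bound is what buys the extra power $\Tt^{-\delta}$.
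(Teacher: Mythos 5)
Your overall strategy is exactly the paper's: write $\pn_{\e\Tt}(a,b)$ via \eqref{tranPDeltaN} as a sum of $\Z^2$-kernels over translates, apply the local CLT \eqref{LCLT} termwise (with the rounding $x\mapsto x^+$ producing a relative $\Ocal(\Tt^{-1/2})$ error on each Gaussian term, exactly the paper's ``$(1+\Ocal(\Tt^{-1/2}))$''), and cut the sum over $z$ at some radius, treating the near terms with the $A_t$-bound and the far terms as a tail event. The paper cuts at $m=\Tt^{1/4+\delta}$ and uses the crude bound $A_t\le t^{-2}$ on the $\Ocal(m^2)$ near terms, giving $\Ocal(\Tt^{-3/2+2\delta})$, while you cut at $|z|\asymp\sqrt{\Tt}$ and use the sharper $\Ocal(|x-y|^{-2}t^{-1})$ bound; both are fine and yield a power saving.

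There is, however, one step in your treatment of the far terms that fails under the paper's hypotheses: you invoke a pointwise Gaussian upper bound $p_{\e\Tt}(a^+,(b+Nz)^+)\le\Ocal(\Tt^{-1})\ex^{-c|z|^2N^2/\e}$ for $|z|>\sqrt{\Tt}$. The standing assumption is only the moment condition $E[H(|X_1|)]<\infty$ with $H$ growing faster than any polynomial (e.g.\ $H(n)=\exp(\log^r(n+1))$), which gives all polynomial moments but no exponential moments; for such walks the transition kernel need not decay sub-Gaussianly at distances far beyond $\sqrt{t}$ (a single heavy-ish increment can dominate), so this pointwise bound is not justified. The repair is standard and is what the paper does: do not bound the kernel pointwise, but observe that the sum of $p_{\e\Tt}(a^+,\cdot)$ over all lattice points at distance at least $R$ from $a^+$ is bounded by $P_0\big(\max_{0\le j\le\e\Tt}|S_j|\ge R-\abs{a^+}\big)$, and then apply the maximal inequality of \cite[Equation 2.6]{Lawler2010}, which only needs a finite $p$-th moment; with the paper's cutoff $R=mN\Tt^{1/2}$, $m=\Tt^{1/4+\delta}$, the fourth moment already gives $\Ocal(\Tt^{-1-4\delta})$ (with your cutoff $R\asymp N\Tt$ the same inequality gives $\Ocal_{N,\e}(\Tt^{-2})$). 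With that substitution your argument is complete; the remaining ingredients (exact Brownian scaling of $\p$, gradient/Taylor control of the rounding error summed against the Gaussian decay of $\p_\e$ in $z$, and completing the far Gaussian terms to $\p^{(N)}_\e$) are correct and match the paper's proof.
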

\begin{proof}
For any $a,b\in \L_N$, let us set $m=\Tt^{1/4+\delta}$ for some $\delta>0$ (on which we will put some constraints later), we decompose the probability
\begin{equation*}
\begin{split}
\pn_{\e\Tt}(a,b)
=& \sum_{ \substack{z\in\Z^2 \\|z|\le m}} p_{\e\Tt}\left({a}^+,{b}^++ {z^+}N \right) 
+ \sum_{ \substack{z\in\Z^2 \\|z|>m}} p_{\e\Tt}\left({a}^+,{b}^++ {z^+}N \right)  \\
=& \sum_{ \substack{z\in\Z^2 \\|z|\le m}} P_{{a}^+}(S_{2\e\Tt}=b^++z^+N) + \Ecal\, ,
\end{split}
\end{equation*}
with $\Ecal\le P_0\left(\max_{0\le j\le \e\Tt}|S_j|\ge mN\Tt^{1/2}\right)$.
For each summand in the first term, the transition probability of going from the point ${a}^+$ to $b^++ {z^+}N$ on $\Z^2$ can be estimated by the local central limit theorem in \eqref{LCLT}
\begin{align*}
&p_{\e\Tt}\left({a}^+,{b}^++{z^+}N \right)\\
=& \frac{1}{\Tt}\p_\e(a,b+Nz)\left((1+\Ocal(\Tt^{-1/2})\right)+A_{\e\Tt }(a^+, b^++ {z^+}N)\, ,
\end{align*}
where $\Ocal-$term arises from the rounding issue. Note that $A_t(x,y)<1/t^2$ uniformly in all $t>0$. Therefore,
\begin{equation*}
    \sum_{z\in\Z^2\colon |z|\le m}A_{\e\Tt }(a^+, b^++{z^+}N)\, .
\end{equation*}
is at most $\Ocal(\Tt^{-3/2+2\delta})$. \\
For the second term, by the assumption that $E\left[|X_i|^4\right]<\infty$ we can bound by \cite[Equation 2.6]{Lawler2010}
\begin{equation*}
    P_0\left(\max_{0\le j\le \e\Tt}|S_j|\ge mN\Tt^{1/2}\right)=\Ocal\left(\frac{1}{\Tt^{1+4\delta}}\right)\, ,
\end{equation*}
as $n\to\infty$. Now, choosing $\delta$ sufficiently small yields the claim.
\end{proof}

\begin{lemma}\label{brdgeLmma}
We have that for every $K>0$ and $\e>0$
\begin{equation}
    \sup_{n\in \N}\sup_{|x|\le K}E_{0,x^+}\left[\exp\left({\frac{\tau}{\e\Tt}\Rcal_{\e\Tt}}\right)\right]:=C_K<\infty\, ,
\end{equation}
with 
\begin{equation*}
    \limsup_{K\to\infty} C_K> 0\, .
\end{equation*}
\end{lemma}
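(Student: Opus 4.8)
\medskip
\noindent\textbf{Proof plan.}
The plan is to reduce everything to the free‑walk estimate \eqref{CheResult}, which is the only nontrivial input. The claim $\limsup_{K\to\infty}C_K>0$ is immediate: since $\exp(\tfrac{\tau}{\e\Tt}\Rcal_{\e\Tt})\ge 1$ pointwise, $C_K\ge 1$ for every $K$ (take $x=0$ and $n$ large so that $p_{\e\Tt}(0,0)>0$). For the finiteness $C_K<\infty$, note first that for the finitely many small $n$ the inner supremum is a maximum over finitely many displacements $x^+$, and each bridge expectation is finite because $\Rcal_{\e\Tt}\le\e\Tt+1$ is bounded (one tacitly restricts to $x$ with $p_{\e\Tt}(0,x^+)>0$, so the bridge is defined). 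Hence it suffices to bound $E_{0,x^+}\big[\exp(\tfrac{\tau}{\e\Tt}\Rcal_{\e\Tt})\big]$ uniformly over $|x|\le K$ for all $n\ge n_0(K,\e)$.

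Second, I record the matching of scales: $\log(\e\Tt)=\log n-\log\log n+\log\e=\tau(1+o(1))$, and likewise $\log(\e\Tt/2)=\tau(1+o(1))$. Consequently there is $n_0(\e)$ such that for all $n\ge n_0(\e)$ and every integer $k$ with $\tfrac{\e\Tt}{2}-1\le k\le\tfrac{\e\Tt}{2}+1$ one has $\tfrac{2\tau}{\e\Tt}\le 2\,\tfrac{\log k}{k}$, so that \eqref{CheResult} with $\theta=2$ gives $E_0\big[\exp(\tfrac{2\tau}{\e\Tt}\Rcal_{k})\big]\le E_0^{\langle k\rangle}\big[\exp(2\tfrac{\log k}{k}\Rcal_{k})\big]\le C<\infty$, uniformly in such $n,k$. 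This is the only place \eqref{CheResult} is used.

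The heart of the argument is to dominate the bridge by the free walk separately on each half of the time interval. Write $m=\e\Tt$ and $\lambda=\tfrac{\tau}{\e\Tt}$, split $\Rcal_m\le\Rcal_{[0,\floor{m/2}]}+\Rcal_{[\floor{m/2},m]}$, and apply Cauchy--Schwarz:
\begin{equation*}
E_{0,x^+}\big[\ex^{\lambda\Rcal_m}\big]\le E_{0,x^+}\big[\ex^{2\lambda\Rcal_{[0,\floor{m/2}]}}\big]^{1/2}\,E_{0,x^+}\big[\ex^{2\lambda\Rcal_{[\floor{m/2},m]}}\big]^{1/2}\, .
\end{equation*}
The functional in the first factor is measurable with respect to $\sigma(S_0,\dots,S_{\floor{m/2}})$, on which the Radon--Nikodym density of $P_{0,x^+}$ against $P_0$ equals $p_{m-\floor{m/2}}(S_{\floor{m/2}},x^+)\big/p_m(0,x^+)$ by the Markov property. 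The numerator is $\le C_1/m$ by the uniform planar heat‑kernel bound $\sup_{y,z}p_k(y,z)\le C_1/k$; the denominator is $\ge\tfrac{c_1}{m}\ex^{-K^2/\e}$ for $n$ large and $|x|\le K$ by the local CLT \eqref{LCLT}, since $|x^+|^2/(2m)\le K^2/(2\e)+o(1)$. Hence this density is bounded by $C_2=C_2(K,\e)$, so the first factor is $\le C_2\,E_0\big[\ex^{2\lambda\Rcal_{\floor{m/2}}}\big]$. For the second factor I reverse time: by symmetry of the increments, under $P_{0,x^+}$ the time‑reversed path is a bridge from $x^+$ to $0$, and $\Rcal_{[\floor{m/2},m]}$ is its range over the first $m-\floor{m/2}$ steps; the same density bound together with translation invariance of the range yields the second factor $\le C_2\,E_0\big[\ex^{2\lambda\Rcal_{m-\floor{m/2}}}\big]$. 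Since $\floor{m/2}$ and $m-\floor{m/2}$ both lie in $[\tfrac{\e\Tt}{2}-1,\tfrac{\e\Tt}{2}+1]$, the scale‑matching step bounds both remaining expectations by $C$, and so $E_{0,x^+}\big[\exp(\tfrac{\tau}{\e\Tt}\Rcal_{\e\Tt})\big]\le C_2\,C$ for all $n\ge n_0(K,\e)$, uniformly over $|x|\le K$, which completes the proof.

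The step I expect to be the only real obstacle is the halving. The naive route — writing $E_{0,x^+}[\,\cdot\,]=E_0\big[\,\cdot\,\1\{S_{\e\Tt}=x^+\}\big]/p_{\e\Tt}(0,x^+)$ and dropping the indicator — costs a factor $p_{\e\Tt}(0,x^+)^{-1}\asymp\e\Tt$, which is fatal on the exponential scale. Splitting the range into two pieces, each depending only on half of the trajectory, is exactly what keeps the bridge‑to‑free density of order one; everything else is routine bookkeeping with the local CLT and \eqref{CheResult}.
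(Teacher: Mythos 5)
Your proof is correct and follows essentially the same route as the paper: split the trajectory at the midpoint $\e\Tt/2$, use subadditivity of the range together with Cauchy--Schwarz, compare the bridge to the free walk via the uniform bound $p_t\le c/t$ and the local CLT lower bound $p_{\e\Tt}(0,x^+)\ge c\,\ex^{-K^2/(2\e)}/(\e\Tt)$, and conclude with the uniform exponential-moment bound \eqref{CheResult}. The only cosmetic difference is that you bound the Radon--Nikodym derivative of the bridge against the free walk on each half's $\sigma$-algebra directly, whereas the paper conditions on the midpoint value and carries out the same kernel estimates after a Cauchy--Schwarz and Jensen step over that sum.
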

\begin{proof}
The reasoning here follows \cite{BBH}. The second statement is trivial, as $C_K\ge 1$ by construction. 
Denote by $E_{0,y,x}$ the measure of the random walk conditioned to be at $y$ at time $\e\Tt/2$ and at $x$ at time $\e\Tt$. We then have
\begin{equation*}
\begin{split}
& E_{0,x^+}\left[\exp\left({\frac{\tau}{\e\Tt}\Rcal_{\e\Tt}}\right)\right]\\
&= \sum_{y\in \Z^d}\frac{p_{\e\Tt/2}(0,y)p_{\e\Tt/2}(y,x^+) }{p_{\e\Tt}(0,x^+)} E_{0,y,x^+}\left[\exp\left({\frac{\tau}{\e\Tt}\Rcal_{\e\Tt}}\right)\right]\\
&\le \sum_{y\in \Z^d}\frac{p_{\e\Tt/2}(0,y)p_{\e\Tt/2}(y,x^+) }{p_{\e\Tt}(0,x^+)} E_0\left[\ex^{{\frac{\tau}{\e\Tt}\Rcal_{\e\Tt/2}}}\Big | S_{\e\Tt/2}=y\right]\\
&\qquad\qquad\qquad\qquad\qquad\qquad\qquad\times E_{0}\left[\ex^{{\frac{\tau}{\e\Tt}\Rcal_{\e\Tt/2}}}\Big | S_{\e\Tt/2}=y-x^+\right]\, ,
\end{split}
\end{equation*}
where in the last step we used the subadditivity of $\Rcal_n$. Then apply Cauchy-Schwarz inequality and Jensen's inequality to get
\begin{align*}
\le& \frac{1}{p_{\e\Tt}(0,x^+)}\sum_{y\in \Z^d}\left( p_{\e\Tt/2}(0,y)E_0\left[\exp\left({\frac{\tau}{\e\Tt}\Rcal_{\e\Tt/2}}\right)\Big | S_{\e\Tt/2}=y\right]\right)^2 \\
\le& \frac{1}{ p_{\e\Tt}(0,x^+)}\sum_{y\in \Z^d} p_{\e\Tt/2}(0,y)^2E_0\left[\exp\left({\frac{2\tau}{\e\Tt}\Rcal_{\e\Tt/2}}\right)\Big | S_{\e\Tt/2}=y\right]\,.
\end{align*}
Employing \cite[Proposition 2.4.6]{Lawler2010}, we can bound $p_t(x,y)\le c/t$ for some $c>0$ neither depending on $t\in \N$ nor $x,y \in \Z^2$. Applying \eqref{CheResult} and (in the second step) the Markov property, we conclude that the quantity above is bounded from above by
\begin{equation*}
\begin{split}
\le& \frac{2c }{\e\Tt p_{\e\Tt}(0,x^+)}\sum_{y\in \Z^d} p_{\e\Tt/2}(0,y)E_0\left[\exp\left({\frac{2\tau}{\e\Tt}\Rcal_{\e\Tt/2}}\right)\Big | S_{\e\Tt/2}=y\right]
\\
\le& \frac{2c }{\e\Tt p_{\e\Tt}(0,x^+)}E_0\left[\exp\left({\frac{2\tau}{\e\Tt}\Rcal_{\e\Tt/2}}\right)\right]\\
\le& 2c \ex^{K^2/(2\e)}E_0\left[\exp\left({\frac{2\tau}{\e\Tt}\Rcal_{\e\Tt/2}}\right)\right]\, .
\end{split}
\end{equation*}
This finishes the proof.
\end{proof}


\section{Notation Glossary}\label{glossary}
In this section, we list most of the notation used throughout the paper.
\subsection{Spaces and Projections}
We will always work with the Skorokhod space $D$ of cadlag paths from $[0,\infty)$ onto $\R^2$. The family of coordinate projections on $\R^2$ is denoted by $(S_t)_{t\ge 0}$. Given a space $E$, denote the space of probability measures of $E$ by $\Mcal_1(E)$.

\subsection{Domains and Scalings}
In general, throughout the paper, $N$ is a cut-off constant, compactifying $\R^2$ to the torus. Furthermore, $n$ is the inverse lattice spacing.
\begin{enumerate}
    \item
    $x^+=\floor{x\Ttt}$ and $x^-=\floor{x\Ttt}\Tt^{-1/2}$ for $x\in \R^2$.
    \item
    $\tau=\log n$, the scaling of the LDP.
    \item
    $\Tt=n/\log n$, the mean scaling.
    \item
    $Q_\tau=\sqrt{\log \Tt\log\log \Tt}^{-1}$, the size of the holes during the cutting procedure.
    \item 
    $\L_N=[-N/2,N/2)^2$, the continuum torus of length $N$.
    \item
    $\Delta_\tau=\L_N\cap \Tt^{-1/2}\Z^2$, the rescaled lattice.
    \item
    $B_{R}=\{x\in \R^2\colon |x|\le r\}$, the ball centered at zero, also $B_R(x):=x+B_R$.
\end{enumerate}

\subsection{Kernels and Measures}
The superscripts $(N)$ and $\langle N\rangle$ imply that the underlying process lives on the torus. The use of \texttt{mathfrak} and \texttt{mathbb} indicate continuum objects. We occasionally use the shorthand notation $p(y-x)=p(x,y)$, if a kernel $p$ is translation invariant. Similarly, if the sub/superscript is equal to zero, occasionally we omit it, i.e. $P=P_0$.
\begin{enumerate}
    \item
    $P_x$ measure of the planar random walk defined on $\Z^2$ started at $x$, and $p_t(x,y)$ the transition kernel from point $x$ to point $y$ at time $t$ associated to $P_x$.
    \item
    $\Pn_x$ measure of the planar random walk projected into $\Dt$, and $p^{\langle N\rangle}_t(x,y)$ the transition kernel associated to $\Pn_x$.
    \item 
    $\P_x$ the measure of Brownian motion defined on $\R^2$, and $\p_t(x,y)$ the Brownian transition kernel associated to $\P_x$.
    \item
    $\P_x^{(N)}$ the measure of Brownian motion projected onto $\L_N$, and $\p^{(N)}_t(x,y)$ the transition kernel associated to $\P_x^{(N)}$ .
    \item
    $\P_{a,b}$, $\P_{a,b}^{(N)}$, $P_{a,b}$, $P_{a,b}^{\langle N\rangle }$ the bridge measures (on the whole space and on the torus) of length $\e$ for the Brownian motion, and length $\e\Tt$ for the random walks. The expectation is denoted in the same style.
    \item
    Define $\Scal_{n,\eps}=\{S_{i\eps \Tt}\}_{1\le i\le \tau/\eps}$ the skeleton walk, then denote $\Pne,\Ene$ the conditional law/expectation given $\Scal_{n,\e}$ (where $S_t$ is distributed under $\Pn_0$).
    \item
    $b_{n,\e}(y,z)=\Pn_{y^-}\left(\sigma\le \e\Tt\mid S_{\e \Tt}=z^-\right) $, and $b_{n,\e}^{\rho}(y,z)=b_{n,\e}(y,z)\1\{y,z\notin B_{\rho}\}$ for $y,z\in \R^2$ with $\rho>0$ the radius of the centred ball. 
    \item
    $\phi_\e(y,z)=\left[{\p_{\e/2}^{(N)}(z-y)}\right]^{-1}{\int_0^\e \d s \p_{s/2}^{(N)}(-y)\p_{(\e-s)/2}^{(N)}(z)}$ with \newline
    $\phi_\e^\rho(y,z)=\phi_\e(y,z)\1\{y,z\notin B_{\rho}\}$.
\end{enumerate}
\subsection{Stopping times}
\begin{enumerate}
    \item 
    $\sigma=\min\{n\ge 0 \colon S_n=0\}$ and $\sigma_r=\min\{n\ge 0\colon S_n\in\partial B_{N/r}\}$.
    \item
    $H_z=\min\{n\ge 0\colon S_n=z\}$, for a $z\in \Z^2$.
\end{enumerate}

\section*{Acknowledgements}
The authors would like to thank the two anonymous referees whose valuable suggestions improved the paper greatly.\\
Quirin Vogel would like to thank his supervisors Stefan Adams and Wei Wu for their support. Research of Jingjia Liu was funded by the Deutsche Forschungsgemeinschaft (DFG, German Research Foundation) under Germany's Excellence Strategy EXC 2044 -390685587, Mathematics M\"unster: Dynamics -Geometry -Structure.


\begin{thebibliography}{}
\bibliographystyle{alpha} 
\bibitem{AS}
{Asselah}, A. and {Schapira}, B., Deviations for the Capacity of the Range of a Random Walk, \textit{arxiv preprint 1807.02325} (2018)
 
\bibitem{ASS1}
{Asselah}, A.; {Schapira}, B. and Sousi, P., Capacity of the range of random walk on $\Z^4$, \textit{Ann. Probab}, 47, 1447--1497 (2019)

\bibitem{ASS2}
Asselah, A.; Schapira, B. and Sousi, P., Capacity of the range of random walk on $\Z^d$, \textit{Trans. Amer. Math. Soc.}, 11, 7627--7645 (2018)


\bibitem{bass2009moderate}
Bass, R. F.; Chen, X. and Rosen, J., Moderate deviations for the range of planar random walks, \textit{Mem. Amer. Math. Soc.}, 198, viii+82 pp (2009)


\bibitem{BK02}
Bass, R. F. and Kumagai, T., Laws of the iterated logarithm for the range of random walks in two and three dimensions, \textit{Ann. Probab.}, \textbf{30}, 1369--1396 (2002)


\bibitem{BOLTHAUSEN87}
Bolthausen, E., Markov process large deviations in $\tau$-topology, \textit{Stochastic Process. Appl.}, \textbf{25}, no. 1, 95--108  (1987)


\bibitem{chen2010random}
Chen, X., Random Walk Intersections: Large Deviations and Related Topics, \textit{Math. Surveys Monogr.}  Amer. Math. Soc. (2010)


\bibitem{dvoretzky1951}
Dvoretzky, A. and Erd{\H o}s, P., Some problems on random walk in space., \textit{Proc. Second Berkeley Symp. on Math. Statist. and Prob.}, pp. 353--367 (1950)



\bibitem{DGK18}
{Deligiannidis}, G.; {Gouezel}, S. and {Kosloff}, Z., Boundary of the Range of a random walk and the F{\"o}lner property, \textit{arxiv preprint 1810.10454} (2018)


\bibitem{denHollander1984}
den Hollander, W. Th. F., Random walks on lattices with randomly distributed traps. I. The average number of steps until trapping, \textit{J. Statist. Phys.},  no. 3-4, 331--367 (1984)

\bibitem{DV75}
Donsker, M. D. and Varadhan, S. R. S., Asymptotic evaluation of certain Markov process expectations for large time. IV, \textit{Comm. Pure Appl. Math.} 
\textbf{36}, no. 2, 183--212. (1983)


\bibitem{DV75DistinctRW}
Donsker, M. D. and Varadhan, S. R. S., On the number of distinct sites visited by a random walk. 
\textit{Comm. Pure Appl. Math.} 
\textbf{32}, no. 6, 721--747  (1979)



\bibitem{dembo2009large}
Dembo, A. and Zeitouni, O., Large deviations techniques and applications. Stochastic Modelling and Applied Probability, 38. 
\textit{Springer-Verlag, Berlin.} (2010).


\bibitem{einmahl1989extensions}
Einmahl, U., Extensions of results of Koml{\'o}s, Major, and Tusn{\'a}dy to the multivariate case. 
\textit{Journal of multivariate analysis} 
\textbf{28},  no.1, 20--68 (1989)




\bibitem{GPV18}
{Gantert}, N.; {Popov}, S. and {Vachkovskaia}, M., On the range of a two-dimensional conditioned simple random walk, \textit{arxiv preprint 1804.00291} (2018)


\bibitem{gall1991}
Le Gall, J. and Rosen, J., The range of stable random walks. 
\textit{Ann. Probab.} 
\textbf{19},  no. 2, 650--705 (1991)


\bibitem{hamana2006}
Hamana, Y., On the range of pinned random walks.
\textit{Tohoku Math. J. (2)} 
\textbf{58}, no. 3, 329--357 (2006) 


\bibitem{HK01}
Hamana, Y. and Kesten, H., A large-deviation result for the range of random walk and for the Wiener sausage. 
\textit{Probab. Theory Related Fields} 
\textbf{120}, no. 2, 183--208 (2001)

\bibitem{HW88}
den Hollander, W. Th. F. and Weiss, G. H., On the range of a constrained random walk,  
\textit{J. Appl. Probab.} 
\textbf{25}, no. 3, 451--463 (1988)


\bibitem{JO68}
Jain, N. and Orey, S., On the range of random walk. 
\textit{Israel J. Math.} 
\textbf{6}, 1968, 373--380 (1969)


\bibitem{Jain1971}
Jain, N. and Pruitt, W. E., The range of transient random walk. 
\textit{J. Analyse Math.} 
\textbf{24}, 369--393 (1971)


\bibitem{lejan2008}
Le Jan, Y., Markov paths, loops and fields.
Lectures from the 38th Probability Summer School held in Saint-Flour, 2008. École d'Été de Probabilités de Saint-Flour. [Saint-Flour Probability Summer School].
\textit{Springer, Heidelberg.}, viii+124 pp (2011)


\bibitem{Lawler2010}
Lawler, G. F. and Limic, V., Random walk: a modern introduction.  
Cambridge Studies in Advanced Mathematics, 123.
\textit{Cambridge University Press, Cambridge.} xii+364 pp. (2010)


\bibitem{PP11}
Phetpradap, P. Intersections of Random Walks.
PhD Thesis, University of Bath (2011)



\bibitem{sznitman1998brownian}
Sznitman, A.S., Brownian motion, obstacles and random media. Springer Monographs in Mathematics.
\textit{Springer-Verlag, Berlin,} xvi+353 pp (1998)


\bibitem{Talagrand95}
Talagrand, M., Concentration of measure and isoperimetric inequalities in product spaces.   
\textit{Inst. Hautes Études Sci. Publ. Math.} No. 81, 73--205 (1995)


\bibitem{Uchiyama2009}
Uchiyama, K., The mean number of sites visited by a pinned random walk. 
\textit{Math. Z.}
\textbf{261}, no. 2, 277--295 (2009)


\bibitem{uchiyama2011}
Uchiyama, K., The first hitting time of a single point for random walks. 
\textit{Electron. J. Probab.} 16, no. 71, 1960--2000 (2011)



\bibitem{WR07}
Weiss, G. H. and Rubin, R. J., Random Walks: Theory and Selected Applications. Advances in Chemical Physics.
\textit{John Wiley and Sons, Inc., New York,} (1982)


\bibitem{BBH}
van den Berg, M.; Bolthausen, E. and den Hollander, F., Moderate deviations for the volume of the Wiener sausage. 
\textit{Ann. of Math. (2)}
\textbf{153}, no. 2, 355--406 (2001)


\bibitem{Qu20}
{Vogel}, Q., A note on the intersections of two random walks in two dimensions, \textit{arxiv preprint 2007.13432} (2020)


\end{thebibliography}
\end{document}